\documentclass[hidelinks,onefignum,onetabnum]{siamart250211}

\usepackage{amsmath}
\usepackage{amsfonts}
\usepackage{amssymb}
\usepackage{bm}
\usepackage{graphicx}
\usepackage{algorithm}
\usepackage{algpseudocode}
\usepackage{booktabs}

\algrenewcommand\algorithmicrequire{\textbf{Input:}}
\algrenewcommand\algorithmicensure{\textbf{Output:}}

\newsiamthm{assumption}{Assumption}
\newsiamremark{remark}{Remark}

\DeclareMathOperator{\Cov}{Cov}
\DeclareMathOperator{\Var}{Var}
\DeclareMathOperator{\tr}{tr}
\DeclareMathOperator{\diag}{diag}
\DeclareMathOperator*{\argmin}{argmin}
\DeclareMathOperator{\dist}{dist}

\newcommand{\R}{\mathbb{R}}
\newcommand{\E}{\mathbb{E}}
\newcommand{\Id}{I}
\newcommand{\cF}{\mathcal{F}}

\newcommand{\cR}{\mathcal{R}}
\newcommand{\barE}{\overline E}
\newcommand{\Fhat}{\widehat F}
\newcommand{\ghat}{\widehat g}
\newcommand{\SigF}{\Sigma_F}
\newcommand{\SigH}{\Sigma_{Hv}}

\newcommand{\ip}[2]{\left\langle #1,#2\right\rangle}
\newcommand{\norm}[1]{\left\|#1\right\|}

\headers{Geometry-Preserving Saddle Search}{Y. Yu and Y. Wang}

\title{Geometry-Preserving Nudged Elastic Band and Dimer Methods under Anisotropic Force Uncertainty}

\author{Yifan Yu\thanks{Department of Mathematics, Faculty of Science, National University of Singapore, 10 Lower Kent Ridge Road, Singapore.}
\and Yangshuai Wang\thanks{Corresponding author. Department of Mathematics, Faculty of Science, National University of Singapore, 10 Lower Kent Ridge Road, Singapore
  (\email{yswang@nus.edu.sg}).}}

\ifpdf
\hypersetup{
  pdftitle={Geometry-Preserving Nudged Elastic Band and Dimer Methods under Anisotropic Force Uncertainty},
  pdfauthor={Y. Yu and Y. Wang}
}
\fi

\begin{document}

\maketitle

\begin{abstract}
The nudged elastic band (NEB) and Dimer methods are standard tools for computing minimum-energy paths and index-one saddle points in atomistic transition problems.  They are increasingly driven by surrogate or learned force models, whose force errors are often anisotropic and spatially varying near transition states and defect cores, where saddle-search iterations are most sensitive.
We introduce uncertainty-aware NEB and Dimer methods (UA-NEB, UA-Dimer) that use covariance as an optimizer-level reliability metric while preserving the mean-potential saddle-search equations: an oblique normal projection for NEB and covariance-weighted rotation and translation for Dimer.
Both algorithms fit Robbins--Monro recursions; under a local Lyapunov stability hypothesis, verified explicitly for a canonical UA-NEB setting and stated as a hypothesis for UA-Dimer, the stochastic iterations converge almost surely within the corresponding local stability neighborhood.  In the analytic benchmark, UA-NEB reduces mean barrier error by $21\%$ relative to stochastic NEB and UA-Dimer reduces the reflected-gradient residual by $22\%$; in the 127-atom tungsten-vacancy benchmark, full UA-NEB reduces mean barrier error by $56\%$ relative to stochastic NEB and by $23\%$ relative to diagonal covariance weighting.
These results show that anisotropic uncertainty is most useful when embedded in the constrained geometry of the optimizer rather than collapsed into a scalar acquisition or trust criterion.
\end{abstract}

\begin{keywords}
nudged elastic band, Dimer method, uncertainty quantification, stochastic approximation, crystalline defects, active learning
\end{keywords}

\begin{MSCcodes}
65C20, 65K10, 65C30, 60H35, 70F45, 82D25
\end{MSCcodes}

\section{Introduction}
\label{sec:introduction}

Atomistic transition rates depend exponentially on index-one saddle barriers~\cite{vineyard1957frequency}: an error of a few millielectronvolts can shift a predicted rate by tens of percent.  Such barriers govern activated events in crystals, molecules, and catalysts, making accurate saddle search central to predictive kinetics.  The nudged elastic band (NEB) method~\cite{jonsson1998neb,henkelman2000improved,henkelman2000climbing} and Dimer method~\cite{henkelman1999dimer,gould2016dimer} are standard tools for this task.  In large-scale searches, however, the optimizer often queries stochastic, ensemble, surrogate, or learned forces rather than exact deterministic forces.  Machine-learned interatomic potentials (MLIPs)~\cite{bartok2010gaussian,batatia2022mace,chen2022qm,wang2024theoretical,ho2026flexible} are one important instance, providing mean forces and anisotropic, spatially inhomogeneous covariance estimates that are often largest near defects and transition states.  For crystalline-defect calculations, adaptive QM/MM coupling and elastic far-field analysis provide complementary ways to control spatial and modeling errors~\cite{wang2021posteriori,olson2023elastic}.

This creates a constrained-algorithm design problem, not only a modeling problem.  The \emph{geometry} of NEB and Dimer is the structure that makes them saddle-search methods rather than generic descent methods: NEB separates normal physical forces from tangential image redistribution, while Dimer couples a curvature-direction solve with a reflected-gradient translation.  These projections, tangent spaces, reflections, and zero sets encode the deterministic saddle problem.  A covariance preconditioner that is harmless for unconstrained gradient descent can be harmful here, because it need not commute with the normal--tangential decomposition and may move the stationary set from the minimum-energy path of the mean potential to a metric-distorted one.  Thus covariance should inform the step geometry without changing the equations whose solution is being sought.

Existing uses of uncertainty in saddle-search computations mainly act outside this update geometry.  GP-NEB constructs a probabilistic path surrogate~\cite{koistinen2017nudged,koistinen2020}, and active-learning force-field algorithms use uncertainty to select configurations for high-fidelity labeling~\cite{podryabinkin2017active,vandermause2020fly,kulichenko2023uncertainty}; force uncertainty can also come from deep ensembles~\cite{lakshminarayanan2017simple}, Bayesian potentials~\cite{frederiksen2004bayesian}, conformal or calibration-based UQ frameworks~\cite{ho2026flexible,yu2025conformal}, or atomistic uncertainty frameworks~\cite{perez2025uncertainty}.  More broadly, randomized collocation and least-squares polynomial-chaos constructions~\cite{tang2014discrete,guo2017stochastic,jakeman2017generalized,guo2019data} and information-theoretic sensitivity bounds for stochastic dynamics~\cite{dupuis2016path,tsourtis2015parametric} provide complementary UQ tools for propagating or ranking uncertainty.  These mechanisms are essential for reducing model bias, but they do not by themselves make each NEB or Dimer step direction-dependent in the local reliability of the force.

The distinction from GP-NEB is one of numerical setting rather than only implementation.  GP-NEB is most natural when high-fidelity force evaluations are scarce and the Gaussian-process posterior itself is the path model.  The present methods assume that a stochastic, ensemble, or learned force model already supplies repeated force queries together with a calibrated covariance estimate, and ask how that covariance should enter the constrained optimizer without changing the mean-potential target equations.  Direct comparison with GP-NEB therefore depends on the surrogate class, training set, and cost of reference-force acquisition; here we isolate the optimizer-level covariance geometry.

Our design principle is to let covariance change the step metric, but not the stationarity equations defining the mean-potential saddle problem.  For NEB, with $G=(\SigF+\lambda\Id)^{-1}$, we replace the Euclidean normal projection by the oblique projection
\[
  Q_{\perp,G}z=z-G\tau\,\frac{\tau^\top z}{\tau^\top G\tau},
\]
so that $Q_{\perp,G}G\nabla\barE=0$ is equivalent to the classical MEP condition $\nabla\barE\parallel\tau$, not to a metric-shifted equation.  For Dimer, whose reflected-gradient translation is full rank, the metric preconditions the reflected gradient and noisy rotational residual while preserving the critical-point set.  We call the algorithms uncertainty-aware NEB (UA-NEB) and uncertainty-aware Dimer (UA-Dimer).

We make three contributions.  First, we derive covariance-weighted UA-NEB and UA-Dimer updates that preserve the saddle-search targets.  Second, we cast the iterations as Robbins--Monro recursions, prove local convergence under stated stability assumptions, and give an explicit canonical Lyapunov verification for UA-NEB.  Third, we test the mechanism under controlled covariance models on an analytic saddle-search problem and a $127$-atom bcc tungsten vacancy hop, using paired seeds and matched force-evaluation counts.

The rest of the paper is organized as follows.  Section~\ref{sec:problem} fixes the deterministic and stochastic problem setting; Section~\ref{sec:algorithms} develops the UA-NEB and UA-Dimer algorithms together with the climbing and active-learning variants; Section~\ref{sec:convergence} establishes the local convergence theory and the scalable covariance realizations; Section~\ref{sec:numerics} reports the numerical experiments; Section~\ref{sec:conclusion} concludes.

\smallskip
\noindent\textbf{Notation.}\ Throughout, $\ip{\cdot}{\cdot}$ and $\norm{\cdot}$ denote the Euclidean inner product and norm, and $\Id$ denotes the identity matrix.  For a nonzero vector $a$, $aa^\top/\norm{a}^2$ is the Euclidean rank-one projection onto $\mathrm{span}\{a\}$ and $P_a:=\Id-aa^\top/\norm{a}^2$ is the corresponding orthogonal complement projection; in particular, for a unit Dimer direction $v$, $P_v=\Id-vv^\top$.  Expectations and variances are taken with respect to the randomness in the force and covariance queries made up to the current iteration.  Metric-dependent projections, weighted norms, residuals, and NEB/Dimer spacings are introduced where they first appear, in \S\ref{sec:algorithms} and \S\ref{sec:convergence}.

\section{Problem setting}
\label{sec:problem}

We work in a mass-weighted, or otherwise preconditioned, configuration
$x\in\R^d$ after removing rigid translations, fixed atoms, and imposed
linear constraints.  Two energies are kept distinct.  The high-fidelity
potential $E_\star$ defines the physical barrier, while $\barE$ denotes the
posterior, ensemble, or surrogate mean energy seen by the optimizer.  The
algorithms in this paper seek saddles and minimum-energy paths of $\barE$;
covariance estimates do not define a new target energy, but instead quantify
which force directions are reliable enough to use in a numerical step.

For later comparison with physical barriers, harmonic transition-state theory
\cite{vineyard1957frequency} gives
$k(T)=\nu^\dagger(T)\exp[-(E_\star(x_\star^\dagger)-E_\star(a))/k_BT]$, where
$x_\star^\dagger$ is the high-fidelity saddle and $\nu^\dagger$ is a
Hessian-eigenvalue prefactor.  A barrier error $\delta$
therefore shifts the rate by
\begin{equation}
  k_\delta(T)/k(T)=\exp[-\delta/k_BT],
  \label{eq:rate_sensitivity}
\end{equation}
so a $10$ meV error gives a rate shift of about $21\%$ at $600$ K and
$47\%$ at $300$ K.  The numerical experiments therefore report barrier
errors directly, rather than only residual norms.  We next recall the
deterministic NEB and Dimer equations, then specify the stochastic force and
covariance interface.

\subsection{Deterministic saddle-search background}
\label{sec:deterministic}

Let $E:\R^d\to\R$ be a smooth deterministic potential and let $a,b$ be two
local minimizers.  In the algorithms below this deterministic target is
$E=\barE$.  A minimum energy path (MEP) is characterized by the
vanishing of the force normal to the path.  In the continuum notation,
\(\varphi:[0,1]\to\R^d\), with $\varphi(0)=a$ and $\varphi(1)=b$, satisfies
\begin{equation}
  (\Id-\tau\tau^\top)\nabla E(\varphi(s))=0,
  \qquad
  \tau(s)=\frac{\varphi'(s)}{\norm{\varphi'(s)}} ,
  \label{eq:mep_condition}
\end{equation}
away from critical points.  A highest point on a generic MEP is an index-one saddle $x^\dagger$, $\nabla E(x^\dagger)=0$, with exactly one negative Hessian eigenvalue.
Stability of the continuous MEP and convergence of discrete MEP
approximations have recently been analyzed in
\cite{liu2024stability,liu2022convergence}.

\subsubsection{NEB}
NEB~\cite{jonsson1998neb,henkelman2000improved} discretizes a path from $a$ to $b$ by fixed endpoints $x_0=a$, $x_{n+1}=b$, and interior images $x_1,\ldots,x_n$.  For an interior image, $\tau_i$ denotes the discrete tangent \emph{estimator}.  We use the energy-weighted rule of \cite{henkelman2000improved}.  Under this rule, the tangent is chosen from the forward and backward secants
$\Delta_i^\pm=x_{i\pm1}-x_i$, with an energy-weighted combination near local
energy extrema, and then unit-normalized.  We denote this standard mapping by
\begin{equation}
  \tau_i=\mathcal T_i(x_{i-1},x_i,x_{i+1};E).
  \label{eq:nudged_tangent_rule}
\end{equation}
This choice avoids the common corner-cutting and sliding-down instabilities
of the original tangent rule; in the small-spacing limit on a smooth MEP,
$\tau_i\to\tau(s)$ of \eqref{eq:mep_condition}.

The deterministic NEB force on an interior image is
\begin{equation}
  \cF^{\rm NEB}_i
  =
  -(\Id-\tau_i\tau_i^\top)\nabla E(x_i)
  + k_s\big(\norm{x_{i+1}-x_i}-\norm{x_i-x_{i-1}}\big)\tau_i ,
  \label{eq:classical_neb}
\end{equation}
The normal component of the physical force relaxes the band onto the MEP,
while the spring component distributes images along the path.  A
climbing-image modification removes the spring force and reverses the
tangential true force on the image with largest energy
\cite{henkelman2000climbing}.
Preconditioned MEP finders accelerate this deterministic relaxation by changing
the path metric~\cite{makri2019preconditioning}; the covariance metric below
has the different role of damping uncertain force directions while preserving
\eqref{eq:mep_condition}.

The stochastic algorithms below preserve the MEP stationarity condition while
replacing the Euclidean normal-force projection in \eqref{eq:classical_neb} by
covariance-weighted directions.

\subsubsection{Dimer}
The Dimer method~\cite{henkelman1999dimer,gould2016dimer} seeks an index-one saddle without constructing an entire path.  Given a center $x$ and unit direction $v$, it estimates the Hessian action by a centered force difference.  With the energy-Hessian convention,
\begin{equation}
  \widehat H_h(x)v
  =
  -\frac{\Fhat(x+hv)-\Fhat(x-hv)}{2h}.
  \label{eq:hvp}
\end{equation}
Here $\Fhat$ denotes the queried force; in the deterministic case
$\Fhat=F=-\nabla E$, while in the stochastic setting it is the oracle force
specified in the next subsection.
The orientation is rotated toward the lowest-curvature eigenvector by approximately minimizing the Rayleigh quotient $v^\top \nabla^2 E(x)v$ over $\norm{v}=1$.  The center then moves along the reflected gradient
\begin{equation}
  r_x(x,v)
  =
  -\nabla E(x)+2vv^\top\nabla E(x),
  \label{eq:dimer_force}
\end{equation}
which descends in directions orthogonal to $v$ and ascends along $v$.  Since
independent force noise makes the variance of \eqref{eq:hvp} scale like
$h^{-2}$, the rotational step requires its own covariance weighting.  Thus in
both NEB and Dimer, covariance may change the metric of a step but not the
deterministic equations that define the target MEP or saddle.
Related saddle-search algorithms exploit MEP geometry, preconditioning,
high-index saddle dynamics, or problem-specific nullspaces in other ways;
examples include solution-landscape construction by generalized high-index
saddle dynamics~\cite{yin2021solutionlandscape}, spring-pair dynamics guided
by the MEP tangent, and nullspace-preserving saddle search for ordered phase
transitions with translational invariance~\cite{cui2024spring,cui2025efficient}.

\subsection{Stochastic force model and covariance estimation}
\label{sec:covariance}

The algorithms access $\barE$ through a stochastic force oracle.  For a
configuration $x\in\R^d$, the oracle returns
\begin{equation}
  \Fhat(x,\omega) = -\nabla \barE(x) + \zeta(x,\omega),
  \qquad
  \E[\zeta\mid x]=0,
  \qquad
  \SigF(x)=\Cov[\zeta\mid x],
  \label{eq:force_model}
\end{equation}
where $\omega$ labels model randomness, such as an ensemble member,
bootstrap replica, posterior draw, or Gaussian-process sample.  The
covariance $\SigF$ may be obtained from ensemble or Bayesian force models
\cite{frederiksen2004bayesian,lakshminarayanan2017simple,podryabinkin2017active,kulichenko2023uncertainty,perez2025uncertainty}.
Throughout we use the \emph{energy-Hessian sign convention}: the physical
force is $F=-\nabla E$, the gradient estimator is $\widehat g=-\widehat F$,
and the Hessian-vector product is $\nabla^2 E\,v$.  With this convention the
centered-difference Hessian-vector product carries the minus sign in
\eqref{eq:hvp}.  A force-Jacobian convention would omit this sign; it is not
used in the analysis.

The stochastic approximation target is the mean energy $\barE$.  The
high-fidelity energy $E_\star$ enters through calibration, validation, and
active learning, not through the deterministic drift of the optimizer.  Let
$\mathcal F_k$ be the filtration generated by all force, covariance, and
active-learning decisions up to iteration $k$.  For an adapted query $x_k$,
define
\begin{equation}
  \ghat(x_k,\omega_{k+1})=-\Fhat(x_k,\omega_{k+1})
  = \nabla \barE(x_k)-\zeta(x_k,\omega_{k+1}),
  \label{eq:gradient_estimator}
\end{equation}
so $\E[\ghat(x_k,\omega_{k+1})\mid\mathcal F_k]=\nabla\barE(x_k)$.

The algorithms below require only a calibrated covariance operator
consistent with this conditional-moment model.  Ensemble covariance is one
common way to build such an operator.  If ensemble energies are also
available, the uncertainty of a reported barrier can be monitored directly,
for example by $\Var_m[E^{(m)}(x_c)-E^{(m)}(a)]$, where $x_c$ is the
reported barrier configuration.  If a direct barrier variance is unavailable,
one can instead use a linearized propagation of the available covariance
information through the reported barrier functional; concrete operator-probing
estimators are described in Supplementary Section~SM5.  These checks affect reporting and
active-learning triggers, not the mean-potential target.

\subsubsection{Calibrated ensemble covariance}
Let $\{F^{(m)}(x)\}_{m=1}^M$ be force predictions from an independently seeded, bootstrapped, or posterior ensemble, and let
\begin{equation}
  \bar F_M(x)=\frac1M\sum_{m=1}^M F^{(m)}(x).
\end{equation}
The raw sample covariance is
\begin{equation}
  \widehat S_M(x)
  =
  \frac{1}{M-1}
  \sum_{m=1}^M
  \big(F^{(m)}(x)-\bar F_M(x)\big)
  \big(F^{(m)}(x)-\bar F_M(x)\big)^\top .
  \label{eq:raw_ensemble_cov}
\end{equation}
The calibrated covariance used by the algorithms for a single-member
stochastic query is
\begin{equation}
  \widehat\SigF_M(x)
  =
  s_{\rm cal}^2\widehat S_M(x)+\sigma_{\rm floor}^2 \Id ,
  \label{eq:ensemble_cov}
\end{equation}
where $s_{\rm cal}$ is a scalar or blockwise calibration factor obtained on a
validation set and $\sigma_{\rm floor}>0$ prevents spuriously zero variance.
If a force query averages several independent members, the covariance in
\eqref{eq:force_model} is replaced by the covariance of that averaged query;
sampling one member per iteration gives \eqref{eq:ensemble_cov} directly.

The raw ensemble covariance in \eqref{eq:raw_ensemble_cov} measures the
spread of model predictions.  Before it is interpreted as a force-error
covariance, it should be calibrated against high-fidelity validation forces.
We use the Gaussian negative-log-likelihood scaling $s_{\rm cal}$, or a
blockwise variant by atomic species, optionally combined with a
coverage-based safety criterion.  Finite ensembles can also produce noisy
eigenvectors; when needed, we control this effect by a shrinkage parameter
$\rho\in[0,1]$ that blends the raw sample covariance with a block-sparse
projection.  The full calibration likelihood, coverage condition, and
shrinkage formula are collected in Supplementary Section~SM5.  In the numerical experiments
below the prescribed covariance is exact by construction, so we set
$s_{\rm cal}=1$ and $\rho=0$.

\subsubsection{Operator realizations}
For large $d$, the algorithms should not require a dense matrix.  We assume the covariance
module exposes the following operations:
\begin{equation}
  z\mapsto \widehat\SigF(x)z,\qquad
  z\mapsto (\widehat\SigF(x)+\lambda\Id)^{-1}z,\qquad
  \log\det(\widehat\SigF(x)+\lambda\Id),
  \label{eq:covariance_interface}
\end{equation}
and, when the log-determinant penalty is active, directional derivatives in
arbitrary directions $u$,
\[
  u^\top\nabla_x\log\det(\widehat\SigF(x)+\lambda\Id).
\]
The inverse operation can be exact for small blocks, a Woodbury apply for low
rank, or a few Krylov iterations for sparse local covariances.  This operator
formulation is sufficient for the algorithms below, since the NEB and Dimer
updates require only products of $G$ with force-like vectors and scalar
products such as $\tau^\top Gz$.

For a local or message-passing force model with finite effective interaction radius,
epistemic uncertainty is often local in atomic environments even when the
force itself is many-body.  This motivates block or low-rank operator models
such as
\begin{equation}
  \SigF(x)
  \approx
  \sum_{\ell=1}^{L} P_\ell^\top B_\ell(x)P_\ell,
  \label{eq:local_block_cov}
\end{equation}
where $P_\ell:\R^d\to\R^{b_\ell}$ extracts the Cartesian force components of a local atom cluster or defect-core patch, and $B_\ell\succeq0$ is a small dense block.  Overlapping blocks are allowed.  A related low-rank form follows from local feature-gradient Jacobians:
\begin{equation}
  \SigF(x)\approx J_\theta(x) C_\theta J_\theta(x)^\top
  = U(x)C(x)U(x)^\top ,
  \label{eq:jacobian_cov}
\end{equation}
where $C_\theta$ is a posterior or ensemble covariance in parameter or
latent-feature space.  These forms are introduced here only as covariance
interfaces for the algorithms.  Detailed cost models and Woodbury or Krylov
applies are deferred to \S\ref{sec:complexity} and
Supplementary Section~SM4; energy-only probing is described in
Supplementary Section~SM5; and the decomposition of barrier error into
stochastic, optimization, and model terms is stated in Supplementary
Proposition~SM2.1.

\section{Algorithms}
\label{sec:algorithms}

The covariance interface of \S\ref{sec:problem} gives directional
reliability information, but it does not by itself say how that information
should enter a constrained saddle-search algorithm.  The design constraint in
this section is therefore geometric: covariance may change the metric of a
stochastic step, but it must not change the deterministic stationarity
equations for the mean potential.

For NEB, the difficulty is a rank-deficient normal projection that does not
commute with a generic covariance preconditioner.  We resolve this with an
oblique projection whose zero set is the classical MEP condition, then check
local stability and the effect of a metric spring.  For Dimer, the projection
obstruction disappears, but the noisy Hessian-vector rotation and the
reflected-gradient translation require separate covariance weights.  The two
constructions are different, but the organizing principle is the same: use
uncertainty as step geometry, not as a new target energy.
Throughout this section, $\epsilon>0$ denotes a fixed small denominator
regularization.

\subsection{Uncertainty-aware NEB}
\label{sec:uaneb}

Let $x_i$ be an interior image.  Given a regularization parameter $\lambda>0$, define the reliability metric
\begin{equation}
  G_i=G(x_i):=(\SigF(x_i)+\lambda\Id)^{-1}.
  \label{eq:metric}
\end{equation}
Directions with high force uncertainty have smaller weight in the $G_i$ metric and are damped in the preconditioned force.  Since $(\lambda+\lambda_{\max}(\SigF(x_i)))^{-1}\Id\preceq G_i\preceq \lambda^{-1}\Id$, the regularization $\lambda$ keeps the inverse metric uniformly bounded even when the covariance estimator is rank deficient.

\subsubsection{Weighted tangent and projections}

The tangent component must be removed without shifting the MEP equation.  For a gradient vector $g$, the Euclidean projection of $G_i g$ and the $G_i$-orthogonal projection both impose stationarity on a metric-shifted vector.  We instead use the rank-$(d-1)$ projection whose range is the Euclidean normal subspace $\{z:\tau_i^\top z=0\}$ and whose kernel is the metric tangent direction:
\begin{equation}
  Q_{\perp,G_i}(z)
  =
  z
  -
  G_i\tau_i\,
  \frac{\tau_i^\top z}{\tau_i^\top G_i\tau_i},
  \qquad
  Q_{\parallel,G_i}(z)
  =
  G_i\tau_i\,
  \frac{\tau_i^\top z}{\tau_i^\top G_i\tau_i}.
  \label{eq:oblique_projection}
\end{equation}
This is an \emph{oblique} projection: its range is the Euclidean tangent-orthogonal hyperplane and its kernel is $\mathrm{span}\{G_i\tau_i\}$.  Lemma~\ref{lem:oblique_projection} records the zero-set identity needed below.

\begin{lemma}[MEP-preserving projection identities]
\label{lem:oblique_projection}
Let $G$ be symmetric positive definite and $\tau\ne0$.  The operators in \eqref{eq:oblique_projection} satisfy
\[
  Q_{\perp,G}^2=Q_{\perp,G},\qquad
  Q_{\parallel,G}^2=Q_{\parallel,G},\qquad
  \tau^\top Q_{\perp,G}z=0,
\]
and $z=Q_{\perp,G}z+Q_{\parallel,G}z$.  Moreover,
\begin{equation}
  Q_{\perp,G}Gg=0
  \quad\Longleftrightarrow\quad
  (\Id-\tau\tau^\top/\norm{\tau}^2)g=0 .
  \label{eq:mep_zero_set}
\end{equation}
\end{lemma}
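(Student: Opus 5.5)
The plan is a direct computation with the scalar $c:=\tau^\top G\tau$, which is strictly positive because $G$ is symmetric positive definite and $\tau\neq0$. All denominators in \eqref{eq:oblique_projection} are therefore well defined. I will write $Q_{\parallel,G}=G\tau\tau^\top/c$ and $Q_{\perp,G}=\Id-Q_{\parallel,G}$. With this notation the decomposition $z=Q_{\perp,G}z+Q_{\parallel,G}z$ is immediate from the definitions.

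\textbf{Algebraic identities.} First I check idempotency of $Q_{\parallel,G}$:
\[
Q_{\parallel,G}^2=\frac{G\tau(\tau^\top G\tau)\tau^\top}{c^2}=Q_{\parallel,G}.
\]
Idempotency of $Q_{\perp,G}$ then follows from
\[
(\Id-Q_{\parallel,G})^2=\Id-2Q_{\parallel,G}+Q_{\parallel,G}^2=Q_{\perp,G}.
\]
For the range condition I compute
\[
\tau^\top Q_{\perp,G}z=\tau^\top z-\frac{(\tau^\top G\tau)\,\tau^\top z}{c}=0.
\]

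\textbf{Zero-set equivalence \eqref{eq:mep_zero_set}.} Substituting $z=Gg$ gives
\[
Q_{\perp,G}Gg=G\Big(g-\tau\,\frac{\tau^\top Gg}{c}\Big).
\]
Since $G$ is invertible, $Q_{\perp,G}Gg=0$ holds if and only if $g=\alpha\tau$ with $\alpha=\tau^\top Gg/c$. In particular $g\in\mathrm{span}\{\tau\}$.

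Conversely, suppose $g=\beta\tau$ for some scalar $\beta$. Then $\tau^\top Gg/c=\beta$, so the bracket above is $g-\beta\tau=0$.

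It remains to note that $g\in\mathrm{span}\{\tau\}$ is equivalent to $(\Id-\tau\tau^\top/\norm{\tau}^2)g=0$. This holds because $\Id-\tau\tau^\top/\norm{\tau}^2$ is the Euclidean orthogonal projection onto $\tau^\perp$, and its kernel is exactly $\mathrm{span}\{\tau\}$.

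\textbf{Main obstacle.} The argument is elementary, and the only step needing care is the forward direction of the equivalence. There one must use the invertibility of $G$ to factor $G$ out of $Q_{\perp,G}Gg$ and read off $g\parallel\tau$. Positive definiteness enters twice: through $c>0$ and through $\ker G=\{0\}$.
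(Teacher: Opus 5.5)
Your proposal is correct and follows essentially the same direct computation as the paper: use $\tau^\top G\tau>0$, verify the projection identities algebraically, and use invertibility of $G$ to read off $g\parallel\tau$ from $Q_{\perp,G}Gg=0$. The differences are minor. You obtain idempotence of $Q_{\perp,G}$ by expanding $(\Id-Q_{\parallel,G})^2$, whereas the paper derives it from $\tau^\top Q_{\perp,G}z=0$. You also state explicitly that $\mathrm{span}\{\tau\}$ is the kernel of $\Id-\tau\tau^\top/\norm{\tau}^2$, which the paper leaves implicit.
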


\begin{proof}
$\tau^\top Q_{\perp,G}z=\tau^\top z-\tau^\top G\tau\cdot\tau^\top z/(\tau^\top G\tau)=0$.  Idempotence follows since $Q_{\perp,G}^2 z=Q_{\perp,G}z-G\tau\cdot(\tau^\top Q_{\perp,G}z)/(\tau^\top G\tau)=Q_{\perp,G}z$, and analogously for $Q_{\parallel,G}$.  For \eqref{eq:mep_zero_set}: if $Q_{\perp,G}Gg=0$, then $Gg=G\tau\,(\tau^\top Gg)/(\tau^\top G\tau)$, so $g\parallel\tau$; conversely, $g=c\tau$ gives $Q_{\perp,G}Gg=cG\tau-cG\tau=0$.
\end{proof}

\begin{remark}[Why the alternative projections fail]
\label{rem:alt_projections}
Euclidean projection of $Gg$ and $G$-orthogonal projection both require $Gg\parallel\tau$, equivalently $g\parallel G^{-1}\tau$.  Thus their zero set is a metric-shifted line unless $\tau$ is a $G$-eigenvector.  A concrete $2\times2$ example is already decisive: take
\[
  \tau=e_1,\qquad
  G=
  \begin{bmatrix}
  7/4 & -3\sqrt3/4\\
  -3\sqrt3/4 & 13/4
  \end{bmatrix}.
\]
The classical zero set contains $g=e_1$, but
$(I-\tau\tau^\top)Gg=(0,-3\sqrt3/4)^\top\ne0$.  The competing projections
instead select $g\parallel G^{-1}\tau\propto(13,3\sqrt3)^\top$, which is not
parallel to $\tau$.  The oblique form \eqref{eq:oblique_projection} is used
because $Q_{\perp,G}Gg=0$ is equivalent to the classical MEP condition
$g\parallel\tau$.  The extended comparison in
Supplementary Section~SM1, including Supplementary
Figure~S1, visualizes the corresponding flow
geometry.
\end{remark}

The metric also admits a constrained natural-gradient interpretation.  Given a gradient sample $g$, the covariance-preconditioned normal force $-Q_{\perp,G}Gg$ is the solution of
\begin{align}
  -Q_{\perp,G}Gg
  &=
  \argmin_{\tau^\top s=0}
  \left\{
    g^\top s+\frac12 s^\top G^{-1}s
  \right\},
  \notag\\[-1mm]
  &=
  \argmin_{\tau^\top s=0}
  \norm{s+Gg}_{G^{-1}}^{2}.
  \label{eq:neb_variational_step}
\end{align}
Thus high-variance directions are damped before projection, while the deterministic MEP stationarity condition is unchanged.  After computing $Gg$ and $G\tau$, the projection in \eqref{eq:oblique_projection} requires only the scalar products $\tau^\top Gg$ and $\tau^\top G\tau$.

Lemma~\ref{lem:oblique_projection} is algebraic; the next statement gives the corresponding local dynamics.  With fixed tangent, the oblique-force flow has the restricted energy as a Lyapunov function and is locally asymptotically stable at the constrained minimizer.

\begin{lemma}[Fixed-tangent local stability]
\label{lem:fixed_tangent_stability}
Fix a nonzero tangent $\tau$ and a symmetric positive definite metric $G$.  Consider the constrained deterministic flow
\[
  \dot x=-Q_{\perp,G}G\nabla\barE(x),
  \qquad
  \tau^\top(x-x_\star)=0,
\]
where $x_\star$ is a nondegenerate constrained minimizer of $\barE$ on the affine hyperplane $\tau^\top(x-x_\star)=0$.  Then $x_\star$ is a locally asymptotically stable equilibrium of the constrained flow.  In particular, replacing the Euclidean normal force by the oblique covariance-weighted normal force changes the local metric and time scale, but not the constrained critical point.
\end{lemma}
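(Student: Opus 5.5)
We first check that the constraint is invariant under the flow, and then show that the restricted energy is a strict Lyapunov function. For invariance, Lemma~\ref{lem:oblique_projection} gives $\tau^\top Q_{\perp,G}z=0$ for every $z$. Hence $\tfrac{d}{dt}\tau^\top(x-x_\star)=-\tau^\top Q_{\perp,G}G\nabla\barE(x)=0$, so the affine hyperplane $\mathcal H:=\{x:\tau^\top(x-x_\star)=0\}$ is forward invariant and the flow is a well-defined ODE on $\mathcal H$. It is smooth because $\barE$ is smooth, and the denominator satisfies $\tau^\top G\tau>0$.

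The key step is an energy identity on $\mathcal H$. We use $V(x)=\barE(x)-\barE(x_\star)$ as the Lyapunov candidate. Writing $g=\nabla\barE(x)$ and $P=\Id-\tau\tau^\top/\norm{\tau}^2$, we compute
\[
  \dot V=-g^\top Q_{\perp,G}Gg
  =-\Big(g^\top Gg-\frac{(\tau^\top Gg)^2}{\tau^\top G\tau}\Big).
\]
By Cauchy--Schwarz in the $G$-inner product, this is $\le0$, with equality iff $g\parallel\tau$, i.e.\ iff $Pg=0$, which matches \eqref{eq:mep_zero_set}. For a quantitative form, note that $g^\top Gg-(\tau^\top Gg)^2/\tau^\top G\tau$ is the squared $G$-norm of the $G$-orthogonal component of $g$ relative to $\tau$. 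Equivalently, it equals $\min_c (g-c\tau)^\top G(g-c\tau)\ge\lambda_{\min}(G)\min_c\norm{g-c\tau}^2=\lambda_{\min}(G)\norm{Pg}^2$. Thus
\[
  \dot V\le-\lambda_{\min}(G)\norm{P\nabla\barE(x)}^2 .
\]
Here $P\nabla\barE$ is exactly the gradient of $\barE|_{\mathcal H}$, identified with the normal subspace.

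To conclude, we use nondegeneracy. Since $x_\star$ is a nondegenerate constrained minimizer, the restricted Hessian $P\nabla^2\barE(x_\star)P$ is positive definite on $\tau^\perp$. So on a neighborhood $U\cap\mathcal H$ the restricted energy satisfies $V\ge c\norm{x-x_\star}^2$, and $x_\star$ is the only zero of $P\nabla\barE$ there. Sublevel sets of $V$ within $U\cap\mathcal H$ are then compact and invariant, which gives Lyapunov stability. The strict decrease away from $x_\star$, via LaSalle or directly via a local Polyak--\L{}ojasiewicz bound $\norm{P\nabla\barE}^2\ge c'V$, gives asymptotic and in fact exponential convergence. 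Finally, since the equilibria of the flow on $\mathcal H$ are the points with $Pg=0$, and these are independent of $G$ by Lemma~\ref{lem:oblique_projection}, the constrained critical point is unchanged. Only the metric (through $\lambda_{\min}(G)$) and the time scale are affected.

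The main obstacle is the middle step: $Q_{\perp,G}G$ is not symmetric, so $\dot V\le0$ is not immediate from positive semidefiniteness. The Cauchy--Schwarz rewriting above resolves this, and it is the place to be careful that the lower bound is in terms of the Euclidean projection $P\nabla\barE$ and not some metric-shifted quantity.
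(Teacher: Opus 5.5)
Your proof is correct and follows essentially the paper's argument: the hyperplane is invariant because $\tau^\top Q_{\perp,G}z=0$, the restricted energy serves as Lyapunov function, and local strong convexity at the nondegenerate minimizer finishes it. The only difference is that the paper reads off the dissipation $\nabla\barE^\top s=-s^\top G^{-1}s$ from the KKT condition of \eqref{eq:neb_variational_step}, while you compute it directly; the two coincide since $s^\top G^{-1}s=\min_c\norm{g-c\tau}_G^2$, and your bound $\dot V\le-\lambda_{\min}(G)\norm{P\nabla\barE}^2$ usefully makes explicit the strict decrease the paper leaves implicit.

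One correction to your closing remark: $Q_{\perp,G}G=G-G\tau\tau^\top G/(\tau^\top G\tau)$ is in fact symmetric positive semidefinite with kernel $\mathrm{span}\{\tau\}$, and only $Q_{\perp,G}$ itself is nonsymmetric. Your Cauchy--Schwarz step is therefore exactly the proof of that semidefiniteness, not a workaround for asymmetry.
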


\begin{proof}
The vector field is tangent to the hyperplane because $\tau^\top Q_{\perp,G}z=0$ for every $z$.  If $s=-Q_{\perp,G}G\nabla\barE(x)$, then the variational identity \eqref{eq:neb_variational_step} gives the KKT relation $G^{-1}s+\nabla\barE(x)+\mu\tau=0$ for some scalar $\mu$.  Since $\tau^\top s=0$,
\[
  \nabla\barE(x)^\top s=-s^\top G^{-1}s\le0 .
\]
Thus the restricted energy is a Lyapunov function.  Near a nondegenerate constrained minimizer its restriction to the hyperplane is locally strongly convex, so the dissipation above gives local asymptotic stability.
\end{proof}

Lemma~\ref{lem:fixed_tangent_stability} holds $\tau$ fixed.  In the running iteration $\tau_i$ evolves with the band; the next lemma gives the local regularity needed to treat this as smooth state dependence.  The optional tangent relaxation enters only as a summable perturbation.

\begin{lemma}[Tangent regularity and slow-variable contribution]
\label{lem:tangent_slow}
Let $X_\star=(x_{1,\star},\ldots,x_{n,\star})$ be a nondegenerate MEP discretization at which the energy ordering at each interior image is strict, so that the branch of the Henkelman--J\'onsson tangent mapping \eqref{eq:nudged_tangent_rule} is locally constant.  Then $X\mapsto\tau_i(X)$ is $C^1$ and locally Lipschitz in a neighborhood $\mathcal K_\star$ of $X_\star$.  If the relaxed tangent defined below in \eqref{eq:tangent_smoothing} is used with $\omega_{\tau,k}\to0$ and $\sum_k\alpha_k\omega_{\tau,k}<\infty$, then the difference between the relaxed tangent and the instantaneous Henkelman--J\'onsson tangent contributes only a summable perturbation to the recursion \eqref{eq:sa}.
\end{lemma}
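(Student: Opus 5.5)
The proof has two parts: regularity of the Henkelman--J\'onsson tangent, and a summability argument for the relaxed tangent.

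\emph{Regularity.} Recall the structure of the mapping \eqref{eq:nudged_tangent_rule}. Depending on the ordering of $E(x_{i-1}),E(x_i),E(x_{i+1})$, the unnormalized tangent $t_i$ takes one of two forms.
\begin{itemize}
\item[(a)] When the energies are monotone, $t_i$ is the upwind secant $\Delta_i^+$ or $\Delta_i^-$.
\item[(b)] At a local energy extremum, $t_i=\Delta V_i^{\max}\Delta_i^{\pm}+\Delta V_i^{\min}\Delta_i^{\mp}$, where $\Delta V_i^{\max},\Delta V_i^{\min}$ are the max and min of $|E(x_{i\pm1})-E(x_i)|$.
\end{itemize}
The tangent is then $\tau_i=t_i/\norm{t_i}$. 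Strict ordering at $X_\star$ is an open condition, because $E=\barE$ is $C^1$ and the energies are continuous in $X$. So there is a neighborhood on which the same branch is selected; in case (b) the same one of $|E(x_{i+1})-E(x_i)|$, $|E(x_{i-1})-E(x_i)|$ is larger, and neither difference vanishes. On that neighborhood, $t_i$ is a fixed composition of affine maps, $\barE$, and $|\cdot|$ applied to nonzero arguments, hence $C^1$ if $\barE\in C^2$. (Strictly, $C^1$ of $\barE$ gives $\Delta V$ that is $C^1$, which suffices.)

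Nondegeneracy of the discretization gives $\norm{t_i(X_\star)}>0$: images are distinct, and in case (b) the weighted combination does not cancel along the path. Continuity then gives $\norm{t_i}\ge c>0$ on a possibly smaller neighborhood. Normalization is smooth away from zero, so $\tau_i$ is $C^1$. Taking $\mathcal K_\star$ to be a compact ball inside this neighborhood, the derivative is bounded there, which gives the local Lipschitz property by the mean value theorem.

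\emph{Summable perturbation.} I expect \eqref{eq:tangent_smoothing} to be a relaxation of the form $\tilde\tau_{i,k+1}=\mathrm{normalize}((1-\omega_{\tau,k})\tau_i(X_{k+1})+\omega_{\tau,k}\tilde\tau_{i,k})$, or one in which the lag enters with weight $\omega_{\tau,k}$. The argument has three steps.
\begin{enumerate}
\item Show $\norm{\tilde\tau_{i,k}-\tau_i(X_k)}\le C\omega_{\tau,k}$ while $X_k\in\mathcal K_\star$. Both vectors are unit, the deviation is $\omega_{\tau,k}$ times a bounded difference, plus the Lipschitz change $\tau_i(X_{k+1})-\tau_i(X_k)=O(\alpha_k)$, and normalization is Lipschitz near the unit sphere.
\item Substitute into the UA-NEB drift. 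The map $\tau\mapsto Q_{\perp,G}G g+k_s(\cdots)\tau$ is Lipschitz in $\tau$ on unit vectors, uniformly in $X\in\mathcal K_\star$. This holds because $\tau^\top G\tau\ge(\lambda+\lambda_{\max}(\SigF))^{-1}>0$ is bounded below on the compact set, and $g$, $G$ and the spring term are bounded there. The bound on $g$ applies to the conditional mean; for the noise part, conditional second moments are bounded.
\item Write the perturbation term in \eqref{eq:sa} as $\alpha_k\beta_k$ with $\norm{\beta_k}\le C'(\omega_{\tau,k}+\alpha_k)(1+\norm{\zeta_{k+1}})$. Then $\sum_k\alpha_k\E\norm{\beta_k}\le C''\sum_k(\alpha_k\omega_{\tau,k}+\alpha_k^2)<\infty$, using the hypothesis together with the standard Robbins--Monro condition $\sum\alpha_k^2<\infty$. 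By monotone convergence the sum is finite almost surely, which is exactly what is required for a summable perturbation.
\end{enumerate}

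\emph{Main obstacle.} The hardest step is the recursive lag estimate in step 1. If the relaxation weight multiplies the old tangent, the error obeys $e_{k+1}\le\omega_{\tau,k}e_k+\omega_{\tau,k}O(\alpha_k)$. If instead it weights the new one, $e_{k+1}\le(1-\omega_{\tau,k})e_k+O(\alpha_k)$, and then $e_k$ is only $O(\alpha_k/\omega_{\tau,k})$. In that case I would fall back on a discrete Gr\"onwall or Chung-type lemma and require the stated summability in that form. I will adapt this step to whichever convention \eqref{eq:tangent_smoothing} actually uses, keeping the iterates localized in $\mathcal K_\star$ by the stopping-time argument used for \eqref{eq:sa}.
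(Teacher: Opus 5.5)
Your proposal is correct and follows essentially the same route as the paper. Strict energy ordering fixes the Henkelman--J\'onsson branch, so the unnormalized tangent is a $C^1$ combination of secants that is bounded away from zero, and hence $\tau_i$ is $C^1$ and locally Lipschitz; the relaxed tangent is then $O(\omega_{\tau,k})$-close to the raw one, and the Lipschitz dependence of the UA-NEB force on $\tau$ makes this a perturbation that is summable because $\sum_k\alpha_k\omega_{\tau,k}<\infty$.

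The lag recursion you flag as the main obstacle is unnecessary. Since \eqref{eq:tangent_smoothing} puts the weight $\omega_{\tau,k}$ on the previous relaxed tangent, you get $\norm{\widetilde\tau_i^k-\tau_i^{k,{\rm raw}}}\le 4\omega_{\tau,k}$ directly once $\omega_{\tau,k}<1/2$, with no $O(\alpha_k)$ term. Also note that with the paper's convention $\Delta_i^-=x_{i-1}-x_i$, the backward secant enters the extremum branch with a minus sign.
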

This local regularity claim is verified in Supplementary Section~SM2.1.  Thus the instantaneous tangent is a
smooth local function of the band, and the optional tangent relaxation is
absorbed by the bias term $b_k$ in \eqref{eq:sa}.

Lemmas~\ref{lem:oblique_projection} and~\ref{lem:fixed_tangent_stability} settle the normal-force part of the NEB iteration.  The spring force controls image spacing and is measured in the same metric:
\begin{equation}
  F_{i}^{\rm spring}
  =
  k_s
  \big(
    \norm{x_{i+1}-x_i}_{G_i}
    -
    \norm{x_i-x_{i-1}}_{G_i}
  \big)
  \frac{\tau_i}{\sqrt{\tau_i^\top G_i\tau_i}},
  \label{eq:g_spring}
\end{equation}
where $\norm{z}_{G_i}=(z^\top G_i z)^{1/2}$.  We use the image metric $G_i$ on both adjacent segments, which avoids midpoint covariances and keeps Algorithm~\ref{alg:uaneb} image-parallel.  The frozen-metric spring mismatch is a tangential discretization error; under coupled refinement it enters the SA bias when $\sum_k\alpha_k h_{\max,k}^2<\infty$ (see Supplementary Section~SM2.3).

\subsubsection{UA-NEB force}

With the normal projection and spring term specified, let $\ghat(x_i)$ be the noisy gradient estimate of $\nabla\barE(x_i)$.  The UA-NEB force is
\begin{equation}
  \cF_i^{\rm UA}
  =
  -Q_{\perp,G_i}\,G_i\,\ghat(x_i)
  +F_i^{\rm spring}.
  \label{eq:ua_neb_force}
\end{equation}
The first term is path-normal in the Euclidean sense; the second redistributes images along the path.  If $\SigF(x_i)=\sigma^2\Id$, then $G_i$ is a scalar multiple of $\Id$, the oblique projection becomes the Euclidean normal projection, and \eqref{eq:ua_neb_force} reduces to classical stochastic NEB up to scalar step-size and spring-stiffness rescaling.  Thus \eqref{eq:ua_neb_force} is the unpenalized UA-NEB drift.

For finite iterations one may add a transient log-determinant penalty
\begin{equation}
  \Psi_\lambda(x)=\log\det(\SigF(x)+\lambda\Id).
  \label{eq:logdet_penalty}
\end{equation}
When $\SigF$ is differentiable,
$\partial_{x_j}\Psi_\lambda(x)=\tr[(\SigF(x)+\lambda\Id)^{-1}\partial_{x_j}\SigF(x)]$.
The derivative can be evaluated by automatic differentiation, local finite differences, or the local blocks in \eqref{eq:local_block_cov}.  The image update is
\begin{equation}
  x_i^{k+1}
  =
  x_i^k
  +\alpha_k \cF_i^{{\rm UA},k}
  -\alpha_k\gamma_k \nabla_x\Psi_\lambda(x_i^k),
  \qquad i=1,\ldots,n.
  \label{eq:ua_neb_update}
\end{equation}
For convergence to the unpenalized MEP of $\barE$, we take $\gamma_k\downarrow0$ with $\sum_k\alpha_k\gamma_k<\infty$.  A nonzero limiting $\gamma$ instead defines the regularized target $\barE+\gamma\Psi_\lambda$.

\subsubsection{Climbing image and tangent smoothing}

After the band has relaxed close to an MEP, the highest-energy image can be converted into a UA climbing image.  Let $c$ maximize either $\barE(x_i)$ or the lower-confidence score $\barE(x_i)-\kappa_E\widehat\sigma_E(x_i)$, where $\widehat\sigma_E(x_i)$ is the ensemble standard deviation of $E^{(m)}(x_i)-E^{(m)}(a)$.  Barrier reports may use the corresponding upper-confidence value $\barE(x_c)+\kappa_E\widehat\sigma_E(x_c)-\barE(a)$.  The UA climbing force is
\begin{equation}
  \cF_c^{\rm climb}
  =
  -G_c\ghat(x_c)
  +2Q_{\parallel,G_c}G_c\ghat(x_c).
  \label{eq:ua_climbing}
\end{equation}
No spring force is applied to the climbing image.  The optional log-determinant penalty still enters through \eqref{eq:ua_neb_update}.  The remaining images continue to use \eqref{eq:ua_neb_update}; in noisy runs, climbing is activated only after the top image is stable under the chosen energy score.

The next lemma checks the noiseless limit against the classical climbing-image dynamics.

\begin{lemma}[Classical-limit consistency of the UA climbing image]
\label{lem:climbing_limit}
Under the step-size reparametrization $\widetilde\alpha_k:=\lambda^{-1}\alpha_k$ and the limit $\SigF\to0$, $G_c\to\lambda^{-1}\Id$ and $Q_{\parallel,G_c}\to\tau_c\tau_c^\top/\norm{\tau_c}^2$, so the deterministic UA climbing-image update reduces to the classical climbing-image iteration of \cite{henkelman2000climbing} with effective step size $\widetilde\alpha_k$.
\end{lemma}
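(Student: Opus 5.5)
The plan is a direct computation in the noiseless setting, with no stochastic argument needed. Setting $\ghat(x_c)=\nabla\barE(x_c)$, the deterministic UA climbing update is
\[
x_c^{k+1}=x_c^k+\alpha_k\big(-G_c\nabla\barE(x_c^k)+2Q_{\parallel,G_c}G_c\nabla\barE(x_c^k)\big),
\]
with the log-determinant term dropped. I would first record the limit of the metric. Since $G_c=(\SigF(x_c)+\lambda\Id)^{-1}$ and matrix inversion is continuous on the positive definite cone, $\SigF\to0$ gives $G_c\to\lambda^{-1}\Id$. The same conclusion follows quantitatively from the sandwich bound stated after \eqref{eq:metric}: $\norm{G_c-\lambda^{-1}\Id}\le \lambda^{-2}\norm{\SigF}+O(\norm{\SigF}^2)$.

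Next I would pass to the limit in the parallel projector of \eqref{eq:oblique_projection}. For fixed $\tau_c\neq0$, the denominator satisfies $\tau_c^\top G_c\tau_c\ge(\lambda+\lambda_{\max}(\SigF))^{-1}\norm{\tau_c}^2>0$, which stays bounded away from zero uniformly as $\SigF\to0$. Hence
\[
Q_{\parallel,G_c}=G_c\tau_c\tau_c^\top/(\tau_c^\top G_c\tau_c)
\]
is continuous in $G_c$ and tends to $\lambda^{-1}\tau_c\tau_c^\top/(\lambda^{-1}\norm{\tau_c}^2)=\tau_c\tau_c^\top/\norm{\tau_c}^2$. The factor $\lambda^{-1}$ cancels here, which is the key observation: the projector is scale invariant in $G$. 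That is why the only residual effect of $\lambda$ is the overall factor on $G_c\nabla\barE$.

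Substituting both limits, the step becomes
\[
\alpha_k\lambda^{-1}\big(-\nabla\barE+2\tau_c\tau_c^\top\nabla\barE/\norm{\tau_c}^2\big)=\widetilde\alpha_k\big(-\nabla\barE+2\tau_c\tau_c^\top\nabla\barE/\norm{\tau_c}^2\big).
\]
With unit $\tau_c$, this is exactly the classical climbing force of \cite{henkelman2000climbing}, namely the true force with its tangential component reversed and no spring term. Since no spring is applied in either scheme, the two iterations coincide with effective step $\widetilde\alpha_k$.

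The main obstacle is stating precisely what "the limit" means. I would interpret it per iteration, for fixed iterates and fixed $\tau_c$: the one-step update map converges uniformly on compact sets of $(x,\tau_c)$ with $\norm{\tau_c}$ bounded below. By induction, finitely many iterates then converge to the classical ones.

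A second subtlety is the tangent rule. Tangents are computed from \eqref{eq:nudged_tangent_rule} using $\barE$ alone, so they are unaffected by $\SigF$, and the climbing-image choice of $c$ by $\barE$ is likewise unchanged. If the lower-confidence score is used instead, I would note that $\widehat\sigma_E\to0$ in the noiseless limit, so the selected image agrees as well, provided there are no ties in the energy ordering.
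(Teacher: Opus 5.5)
Your proposal is correct and follows the paper's proof. Both substitute $G_c\to\lambda^{-1}\Id$ into \eqref{eq:oblique_projection} and \eqref{eq:ua_climbing}, note that the parallel projector tends to $\tau_c\tau_c^\top/\norm{\tau_c}^2$, and absorb the remaining factor $\lambda^{-1}$ into $\widetilde\alpha_k$; your extras (the uniform denominator bound, the scale invariance $Q_{\parallel,cG}=Q_{\parallel,G}$, and the per-iteration reading of the limit) are sound refinements of the paper's one-line substitution rather than a different route.
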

\begin{proof}
$G_c=(\SigF+\lambda\Id)^{-1}\to\lambda^{-1}\Id$ as $\SigF\to 0$.  Substituting into \eqref{eq:oblique_projection} gives $Q_{\parallel,G}z=\tau\tau^\top z/\norm{\tau}^2$, so \eqref{eq:ua_climbing} reduces to $\lambda^{-1}[-\nabla\barE+2(\tau\tau^\top/\norm{\tau}^2)\nabla\barE]$; rescaling $\alpha_k\to\widetilde\alpha_k$ absorbs the $\lambda^{-1}$.
\end{proof}

To reduce tangent noise we use the relaxed tangent
\begin{equation}
  \widetilde\tau_i^k
  =
  \frac{(1-\omega_{\tau,k})\tau_i^{k,{\rm raw}}
  +\omega_{\tau,k}\widetilde\tau_i^{k-1}}
  {\left\|(1-\omega_{\tau,k})\tau_i^{k,{\rm raw}}
  +\omega_{\tau,k}\widetilde\tau_i^{k-1}\right\|},
  \qquad 0\le\omega_{\tau,k}<1,
  \label{eq:tangent_smoothing}
\end{equation}
where $\tau_i^{k,{\rm raw}}$ is the energy-weighted NEB tangent
\eqref{eq:nudged_tangent_rule}.  In the convergence analysis
$\omega_{\tau,k}\to0$ and
$\sum_k\alpha_k\omega_{\tau,k}<\infty$ make the relaxation a transient
stabilization rather than a change of the limiting tangent rule.

\begin{algorithm}[t]
\caption{UA-NEB}
\label{alg:uaneb}
\begin{algorithmic}[1]
\Require Initial band, covariance estimator, and algorithm parameters.
\Ensure Approximate MEP and saddle candidate.
\For{$k=0,1,2,\ldots$}
  \State Query ensemble forces/covariances at all images; apply $G_i^k=(\widehat\SigF(x_i^k)+\lambda\Id)^{-1}$ as an operator.
  \State Compute energy-weighted tangents $\tau_i^k$ with optional smoothing \eqref{eq:tangent_smoothing}.
  \For{$i=1,\ldots,n$ in parallel}
    \State Apply \eqref{eq:oblique_projection} and \eqref{eq:g_spring} to assemble $\cF_i^{{\rm UA},k}$; use \eqref{eq:ua_climbing} at the climbing image once the climbing criterion is satisfied.
    \State Set $q_i^k=\nabla_x\log\det(\widehat\SigF(x_i^k)+\lambda\Id)$ if $\gamma_k>0$ and $q_i^k=0$ otherwise; set $s_i^k=\alpha_k(\cF_i^{{\rm UA},k}-\gamma_k q_i^k)$.
  \EndFor
  \State Apply trust-radius scaling to $\{s_i^k\}$ if needed; update interior images, keep endpoints fixed, and reparametrize when spacing ratios exceed tolerance.
  \State Stop when \eqref{eq:neb_residual} and the maximum covariance score meet tolerances.
\EndFor
\end{algorithmic}
\end{algorithm}

An optional active-learning trigger can combine pathwise covariance magnitude and
directional force noise.  Let
$\cF_i^k$ denote the current UA-NEB force and
$d_i^k=\cF_i^k/(\norm{\cF_i^k}+\epsilon)$ the corresponding regularized search direction.  We use, for example,
\begin{equation}
  \max_i \lambda_{\max}(\widehat\SigF(x_i^k))>\eta_{\rm var},
  \quad
  \max_i \frac{\sqrt{(d_i^k)^\top\widehat\SigF(x_i^k)d_i^k}}
              {\norm{\cF_i^k}+\epsilon}>\eta_{\rm rel},
  \quad
  \widehat{\Var}[\Delta E_{\rm barrier}]>\eta_{\rm bar}^2.
  \label{eq:al_trigger}
\end{equation}
Here $\widehat{\Var}$ is the empirical ensemble variance of the barrier estimate.  The first condition detects under-sampled configurations, the second detects search directions dominated by model noise, and the third controls the barrier estimate.
For Dimer, the same criteria are applied to the current center and Dimer
endpoints, with the reflected-gradient and rotational residuals replacing the
path-image force in the reliability ratio.

\begin{remark}[Scope of the active-learning trigger]
Equation~\eqref{eq:al_trigger} is optional; Section~\ref{sec:numerics} instead uses a matched label-refresh comparator.  Its thresholds are set by force-error calibration and the desired barrier tolerance.
\end{remark}

\subsection{Uncertainty-aware Dimer}
\label{sec:uadimer}

The Dimer method couples two numerical tasks with different noise profiles.
The rotation of $v$ uses a centered force difference, whose variance scales as
$h^{-2}$ in the Dimer length $h$ (not the NEB image spacing $h_i^\pm$); the
translation of $x$ uses the gradient at the Dimer center.  Treating these two
queries with the same scalar uncertainty would miss the dominant source of
rotational noise.

The geometry is also different from NEB.  The Dimer translation uses the
full-rank reflection $-\Id+2vv^\top$, so a covariance metric cannot displace a
rank-deficient zero set in the way it can for NEB.  Instead, the metric damps
unreliable components of the reflected-gradient translation and weights the
noisy Hessian-vector residual used for rotation.  We first construct the
Hessian-vector covariance and Dimer length, then define the weighted rotation
and translation.  Throughout $\norm{v_k}=1$, with $P_v$ as defined in the
notation.

\subsubsection{Hessian-vector covariance and Dimer length}
\label{sec:hvp_var}

For each ensemble member $m$, define
\begin{equation}
  H_h^{(m)}(x)v
  =
  -\frac{F^{(m)}(x+hv)-F^{(m)}(x-hv)}{2h}.
  \label{eq:member_hvp}
\end{equation}
The ensemble mean gives $\widehat H_h(x)v$ and the sample covariance gives
\begin{equation}
  \widehat\SigH(x,v)
  =
  s_H^2
  \cdot
  \frac{1}{M-1}
  \sum_{m=1}^M
  \big(H_h^{(m)}(x)v-\widehat H_h(x)v\big)
  \big(H_h^{(m)}(x)v-\widehat H_h(x)v\big)^\top
  +\sigma_{H,{\rm floor}}^2\Id .
  \label{eq:hvp_cov}
\end{equation}
Here $s_H$ is calibrated on reference Hessian-vector products by the same negative-log-likelihood scaling as $s_{\rm cal}$ in \eqref{eq:ensemble_cov} (see Supplementary Section~SM5), and $\sigma_{H,{\rm floor}}>0$ is a variance floor.  In MLIP applications, these products can be obtained by centered finite differences of reference forces at $x\pm hv$ (for example DFT or a trusted classical potential), or by direct reference Hessian-vector evaluations when available.
If only force covariance matrices are available, and the force evaluations at
$x\pm hv$ are treated as conditionally independent, then
\begin{equation}
  \SigH(x,v)
  \approx
  \frac{\SigF(x+hv)+\SigF(x-hv)}{4h^2}.
  \label{eq:hvp_cov_force}
\end{equation}
The exact paired variance formula is given in Supplementary Section~SM2.2.  It specializes to
\eqref{eq:hvp_cov_force} at zero correlation, while positive paired-error
correlation makes \eqref{eq:hvp_cov_force} conservative.

This covariance also determines the Dimer length.  If $\barE\in C^4$, the centered-difference bias in \eqref{eq:hvp} is $O(h^2)$, while independent force noise produces $O(h^{-2})$ variance.  We choose the smallest $h$ satisfying
\begin{equation}
  \frac{\tr(P_v\widehat\SigH(x,v)P_v)}
       {\norm{P_v\widehat H_h(x)v}^2+\epsilon}
  \le \eta_H ,
  \label{eq:h_adapt}
\end{equation}
subject to $h_{\min}\le h\le h_{\max}$.

\subsubsection{Weighted rotation and translation}
\label{sec:weighted_rot}

With a rotational covariance from \eqref{eq:hvp_cov} or
\eqref{eq:hvp_cov_force} in hand, we can now weight the rotation step in the
same way \S\ref{sec:uaneb} weighted the NEB normal force.  The deterministic
Dimer orientation residual is
\begin{equation}
  r_v(x,v)=P_v \nabla^2\barE(x)v .
  \label{eq:orientation_residual}
\end{equation}
It vanishes when $v$ is an eigenvector; the local Dimer branch of interest is
the lowest-curvature eigenvector selected by Rayleigh-quotient descent.  We
replace $\nabla^2\barE(x)v$ by $\widehat H_h(x)v$ and precondition the tangent
residual with
\begin{equation}
  C_v(x,v)=(P_v\SigH(x,v)P_v+\lambda_H P_v)^\dagger,
  \label{eq:rotation_metric}
\end{equation}
where the pseudoinverse is taken on the tangent space $T_v\mathbb S^{d-1}$.  A retracted rotation step is
\begin{equation}
  \widetilde v_{k+1}
  =
  v_k-\beta_k C_{v_k}(x_k,v_k)P_{v_k}\widehat H_h(x_k)v_k,
  \qquad
  v_{k+1}=\frac{\widetilde v_{k+1}}{\norm{\widetilde v_{k+1}}}.
  \label{eq:weighted_rotation}
\end{equation}
The parameter $\lambda_H>0$ bounds the angular gain; in noisy runs we also cap the rotation by a trust angle $\theta_{\max}$.

For translation, the covariance metric acts on the reflected gradient.  Unlike NEB, the Dimer translation uses a full-rank reflection rather than a rank-deficient projection.  The required deterministic facts are preservation of critical points and local stability once the orientation has found the unstable mode.  With $G(x)=(\SigF(x)+\lambda\Id)^{-1}$, the UA-Dimer translation is
\begin{equation}
  x_{k+1}
  =
  x_k+\alpha_k
  G(x_k)
  \left[-\ghat(x_k)
  +2v_{k+1}v_{k+1}^\top\ghat(x_k)\right].
  \label{eq:ua_dimer_translation}
\end{equation}
As in NEB, a transient term $-\alpha_k\gamma_k\nabla\Psi_\lambda(x_k)$ can be added.  The reflection $vv^\top$ remains Euclidean; the covariance metric preconditions only the reflected gradient.  The next lemma records the two deterministic facts used later.

\begin{lemma}[Metric-preconditioned Dimer translation preserves critical points]
\label{lem:dimer_zero_set}
Let $G\succ0$, $\norm{v}=1$, and $R_v=-\Id+2vv^\top$.  Define $\mathcal T_G(x,v)=G R_v\nabla\barE(x)$.  Then
\[
  \mathcal T_G(x,v)=0
  \quad\Longleftrightarrow\quad
  \nabla\barE(x)=0 .
\]
Moreover, if $x^\dagger$ is a nondegenerate index-one saddle, $v^\dagger$ is the unstable Hessian eigenvector, and $H^\dagger=\nabla^2\barE(x^\dagger)$ is positive definite on $(v^\dagger)^\perp$, then the frozen-orientation linearization
\[
  \dot e=G(x^\dagger)R_{v^\dagger}H^\dagger e
\]
has all eigenvalues in the open left half-plane.
\end{lemma}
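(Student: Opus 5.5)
The first claim is purely algebraic and I will handle it by invertibility. Since $\norm{v}=1$, the reflection satisfies $R_v^2=(-\Id+2vv^\top)^2=\Id-4vv^\top+4v(v^\top v)v^\top=\Id$, so $R_v$ is invertible. Because $G\succ0$ is also invertible, the product $GR_v$ is nonsingular. Hence $\mathcal T_G(x,v)=GR_v\nabla\barE(x)=0$ holds if and only if $\nabla\barE(x)=0$. No property of $x$ or of the orientation is needed beyond $\norm{v}=1$. This is exactly the contrast with the rank-deficient NEB projection in Lemma~\ref{lem:oblique_projection}.

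For the stability claim, I will rewrite the linearization matrix as the negative of a product of two symmetric positive definite matrices. Write $H^\dagger v^\dagger=\mu v^\dagger$ with $\mu<0$. Since $v^\dagger$ is an eigenvector of the symmetric matrix $H^\dagger$, the matrix $H^\dagger$ leaves both $\mathrm{span}\{v^\dagger\}$ and $(v^\dagger)^\perp$ invariant. The reflection $R_{v^\dagger}$ acts as $+\Id$ on the first subspace and as $-\Id$ on the second. Therefore $R_{v^\dagger}$ and $H^\dagger$ commute, and $S:=-R_{v^\dagger}H^\dagger$ is symmetric. On $\mathrm{span}\{v^\dagger\}$ the matrix $S$ acts as multiplication by $-\mu>0$. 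On $(v^\dagger)^\perp$ it coincides with $H^\dagger$, which is positive definite there by hypothesis. Hence $S\succ0$.

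It remains to locate the spectrum of $A:=G(x^\dagger)R_{v^\dagger}H^\dagger=-GS$, where $G=G(x^\dagger)\succ0$. Using the symmetric square root $G^{1/2}$, we have
\[
  G^{-1/2}AG^{1/2}=-G^{1/2}SG^{1/2}.
\]
The right-hand side is symmetric negative definite, since $G^{1/2}$ is nonsingular and $S\succ0$. So $A$ is similar to a negative definite symmetric matrix. Its eigenvalues are therefore real and strictly negative, which places them in the open left half-plane.

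The only step that needs any care is recognizing that $R_{v^\dagger}H^\dagger$ is symmetric. This relies on $v^\dagger$ being an exact Hessian eigenvector, and it is the reason the statement freezes the orientation at the unstable mode. Once that is in place, the remaining argument is the standard similarity argument for products of SPD matrices.
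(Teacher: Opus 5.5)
Your proof is correct and follows essentially the same route as the paper: you use invertibility of $G$ together with $R_v^2=\Id$ for the zero-set equivalence, and for stability you show that $R_{v^\dagger}H^\dagger$ is symmetric negative definite (your $S=-R_{v^\dagger}H^\dagger\succ0$) and then apply the $G^{1/2}$ similarity transform. Your explicit argument that $R_{v^\dagger}$ and $H^\dagger$ commute, because both preserve $\mathrm{span}\{v^\dagger\}$ and $(v^\dagger)^\perp$, supplies the justification for symmetry that the paper asserts without detail.
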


\begin{proof}
$G$ is invertible and $R_v$ is an involution, hence $\mathcal T_G=0$ iff $R_v\nabla\barE=0$ iff $\nabla\barE=0$.  At the saddle with $v=v^\dagger$, write $H^\dagger v^\dagger=\lambda_1 v^\dagger$ with $\lambda_1<0$ and $H^\dagger|_{(v^\dagger)^\perp}\succ0$.  Then $R_{v^\dagger}H^\dagger$ is symmetric negative definite: it keeps the negative curvature in the $v^\dagger$ direction and reverses the positive curvatures on the orthogonal subspace.  Since $G^{1/2}(R_{v^\dagger}H^\dagger)G^{1/2}$ is also symmetric negative definite and $G R_{v^\dagger}H^\dagger$ is similar to it, the linearization has real negative eigenvalues.
\end{proof}
Thus the Dimer metric preconditions the reflected-gradient direction; it is not a NEB-type zero-set correction.

In Algorithm~\ref{alg:uadimer}, the rotational covariance score is
\[
  \widehat\sigma_{H,k}^2
  =
  \tr(P_{v_k}\widehat\SigH(x_k,v_k)P_{v_k}).
\]

\begin{algorithm}[t]
\caption{UA-Dimer}
\label{alg:uadimer}
\begin{algorithmic}[1]
\Require $x_0,v_0$ with $\norm{v_0}=1$; Dimer length $h$; covariance estimator; parameters $\lambda,\lambda_H,\{\alpha_k\},\{\beta_k\},\Delta_k,\theta_{\max},\eta_H$.
\Ensure Approximate saddle $x^\dagger$ and unstable direction $v^\dagger$.
\For{$k=0,1,2,\ldots$}
  \State Query paired ensemble forces at $x_k\pm hv_k$; form $\widehat H_h(x_k)v_k$ and $\widehat\SigH(x_k,v_k)$ by \eqref{eq:hvp_cov} or \eqref{eq:hvp_cov_force}.
  \State If the HVP-noise ratio \eqref{eq:h_adapt} exceeds $\eta_H$, enlarge $h$ or fall back to $C_{v_k}=P_{v_k}$ and request reference labels at $x_k\pm hv_k$.
  \State Set $\widehat r_v^k=P_{v_k}\widehat H_h(x_k)v_k$, $\delta v_k=-\beta_k C_{v_k}\widehat r_v^k$, capped at $\theta_{\max}$; retract $v_{k+1}=(v_k+\delta v_k)/\norm{v_k+\delta v_k}$.
  \State Query center force/covariance; apply the metric reflected-gradient translation, trust-radius scale it, and update $x_{k+1}$.
  \State Request reference labels when \eqref{eq:al_trigger} holds or $\widehat\sigma_{H,k}^2$ is high; stop by gradient, rotation-residual, and covariance tolerances.
\EndFor
\end{algorithmic}
\end{algorithm}

\subsubsection{NEB--Dimer handoff}
\label{sec:neb_dimer_handoff}

UA-NEB identifies the transition channel, while UA-Dimer refines a local saddle candidate.

Let $c$ be the final climbing or highest-energy image, initialize $x_0^{\rm D}=x_c^{\rm NEB}$, and set the initial Dimer direction to the local path tangent $v_0^{\rm D}=\tau_c^{\rm NEB}$.
The handoff is accepted only if the path-normal residual is small relative to its uncertainty,
\begin{equation}
  \frac{
    \norm{Q_{\perp,G_c}G_c\ghat(x_c)}
  }{
    \sqrt{
      \tr(Q_{\perp,G_c}G_c\widehat\SigF(x_c)G_cQ_{\perp,G_c}^\top)
    }+\epsilon
  }
  \le \eta_{\rm hand}.
  \label{eq:handoff_condition}
\end{equation}
If \eqref{eq:handoff_condition} fails, the band relaxation continues or reference labels are requested near $x_c$.  Upon acceptance, the Dimer phase normalizes $v_0^{\rm D}$ and sets
\[
  \Delta_0^{\rm D}
  =
  \min\{\Delta_{\rm NEB},2\norm{x_c^{\rm NEB}-x_{c-1}^{\rm NEB}}\},
\]
where $\Delta_{\rm NEB}$ is the final NEB trust radius.  The non-climbing band is retained only as path context.

\begin{remark}[Scope and calibration of the handoff test]
Condition~\eqref{eq:handoff_condition} is a signal-to-noise test for seeding local refinement.  By itself it does not imply membership in the Dimer stability neighborhood; the Dimer convergence statement applies once the local hypotheses of Section~\ref{sec:convergence} hold.
\end{remark}

Inexact covariance solves, trust-region tests, stopping criteria, and cached local blocks are treated as implementation perturbations.  In Section~\ref{sec:convergence} they enter the bias term $b_k$; concrete tolerances and merit functions are recorded in Supplementary Section~SM3.

\section{Theory: local stochastic stability and scalable covariance}
\label{sec:convergence}

Section~\ref{sec:algorithms} fixed the deterministic geometry: in their
mean-field limits, UA-NEB and UA-Dimer preserve the classical stationary set.
The question here is whether this geometry survives stochastic implementation.
We prove a local result: once the mean drift dissipates a residual, stochastic
forces, covariance estimation, metric-solve errors, finite Dimer differences,
and transient penalties enter as martingale noise plus summable bias, and the
mean-potential residual converges to zero.

For canonical UA-NEB the required dissipative drift is verified by an explicit
Lyapunov function.  For UA-Dimer we identify the correct local residual near the
selected index-one branch; convergence then follows from the same stochastic
argument when the analogous local Lyapunov drift holds.  Thus the theorem is a
residual-level stability result.  The analytic experiment checks its
finite-time \(O(1/k)\) residual prediction, while the barrier-error experiments
assess whether this residual-level mechanism improves the reported saddle
barrier at fixed iteration counts.

\subsection{Residuals and stochastic-approximation form}

Let $X_k$ denote the full algorithmic state: for NEB,
$X_k=(x_1^k,\ldots,x_n^k)\in\R^{nd}$, while for Dimer,
$X_k=(x_k,v_k)\in \R^d\times\mathbb S^{d-1}$.  After projection to local
coordinates on the sphere in the Dimer case, both algorithms can be written as
\begin{equation}
  X_{k+1}=X_k+\alpha_k[h(X_k)+M_{k+1}+b_k].
  \label{eq:sa}
\end{equation}
Here $h$ is the deterministic drift obtained by replacing stochastic forces and
covariances by their conditional expectations.  The term $M_{k+1}$ is a
martingale difference, and $b_k$ collects finite-difference bias,
covariance-estimation bias, regularization error, retraction error, and the
transient log-determinant penalty when $\gamma_k$ is summable.

For NEB, the $i$th component of $h$ has the form
\begin{equation}
  h_i^{\rm NEB}(X)
  =
  -Q_{\perp,G_i(X)}G_i(X)\nabla\barE(x_i)
  +F_i^{\rm spring}(X),
  \label{eq:neb_drift}
\end{equation}
where $G_i$ is built from the limiting calibrated covariance.  When the same
force sample feeds both the gradient and the covariance, the noise splits into
a force martingale, a covariance fluctuation, and a finite-ensemble bias term;
independent ensemble splits or slowly updated calibration models make the
martingale terms conditionally mean zero up to the bias collected in $b_k$.

The Dimer branch has the same SA structure, now on
$\R^d\times\mathbb S^{d-1}$.  To avoid confusion with the finite-difference
Dimer length $h$, denote the Dimer deterministic vector field by $d^{\rm D}$.
In a local chart, with a fixed rotational-to-translational stepsize ratio
$\rho_\beta$, the rotation and translation stack into
\begin{equation}
  d^{\rm D}(x,v)=
  \begin{bmatrix}
    G(x)r_x(x,v)\\
    -\rho_\beta C_v(x,v)r_v(x,v)
  \end{bmatrix},
  \quad
  \begin{aligned}
    r_x(x,v)&=-\nabla\barE(x)+2vv^\top\nabla\barE(x),\\
    r_v(x,v)&=(\Id-vv^\top)\nabla^2\barE(x)v,
  \end{aligned}
  \label{eq:dimer_drift}
\end{equation}
The implemented Dimer update differs from this drift by
$O(h_{\rm dim}^2)$ finite-difference bias and second-order retraction error
bounded by the trust angle; in \eqref{eq:sa}, $h=d^{\rm D}$ for the Dimer
branch.

The residuals below are the quantities controlled by the theorem.  For NEB, we use
\begin{equation}
  \cR_{\rm NEB}(X)
  =
  \sum_{i=1}^n
  \norm{Q_{\perp,G_i}G_i\nabla\barE(x_i)}^2
  +
  \rho_s
  \sum_{i=1}^n
  \left|
    \norm{x_{i+1}-x_i}_{G_i}
    -
    \norm{x_i-x_{i-1}}_{G_i}
  \right|^2 ,
  \label{eq:neb_residual}
\end{equation}
with endpoints fixed.  A climbing-image residual additionally includes the
reflected tangential force on the current highest-energy image.  For Dimer, the
residual is
\begin{equation}
  \cR_{\rm D}(x,v)
  =
  \norm{(\Id-vv^\top)\nabla^2\barE(x)v}^2
  +
  \norm{-\nabla\barE(x)+2vv^\top\nabla\barE(x)}^2 .
  \label{eq:dimer_residual}
\end{equation}
The residual is the stationarity measure; its local relation to barrier-error
reporting is separated in Supplementary Proposition~SM2.1.

For Dimer, the orientation component of \eqref{eq:dimer_residual} vanishes at
any Hessian eigenvector, so the local branch is selected by the negative mode.
If $x^\dagger$ is a nondegenerate index-one saddle, $v^\dagger$ is the
normalized negative eigenvector of $\nabla^2\barE(x^\dagger)$, and the remaining
eigenvalues have a positive spectral gap, then a standard perturbation argument
gives, in a sufficiently small neighborhood of
$(x^\dagger,\pm v^\dagger)$,
\begin{equation}
  \label{eq:dimer_residual_equiv}
  \begin{aligned}
  c_{\rm D}^-\,
  \dist\!\left((x,v),\{(x^\dagger,\pm v^\dagger)\}\right)^2
  &\le
  \cR_{\rm D}(x,v)  \\
  &\le
  c_{\rm D}^+\,
  \dist\!\left((x,v),\{(x^\dagger,\pm v^\dagger)\}\right)^2 .
  \end{aligned}
\end{equation}
Thus $\cR_{\rm D}$ is the correct local residual once the Dimer phase has
selected the unstable branch.  To see why, write $v$ in a local chart
$v=\pm v^\dagger+\eta+O(\norm{\eta}^2)$ with
$\eta\perp v^\dagger$.  The reflected-gradient component linearizes as
$R_{v^\dagger}\nabla^2\barE(x^\dagger)(x-x^\dagger)+O(\norm{x-x^\dagger}^2+\norm{\eta}\norm{x-x^\dagger})$,
whose leading matrix is invertible because the saddle is nondegenerate.  The
orientation component linearizes in $\eta$ as
$(\nabla^2\barE(x^\dagger)-\lambda_1 I)\eta$ plus terms of order
$O(\norm{x-x^\dagger})$, and the spectral gap between the negative eigenvalue
$\lambda_1$ and the remaining eigenvalues makes this angular block invertible.
The stacked residual map therefore has an invertible block-triangular
linearization, modulo the sign symmetry $v\sim -v$; \eqref{eq:dimer_residual_equiv}
then follows from the inverse function theorem.  Further details are given in
Supplementary Section~SM2.4.

\subsection{Local stability and stochastic convergence}

The convergence proof needs one local deterministic input and one stochastic
input: the mean drift must dissipate the residual, and the remaining terms in
\eqref{eq:sa} must be small in the stochastic-approximation sense.  We collect
these requirements in a single setting.

\begin{assumption}[Local stochastic stability setting]
\label{ass:smooth}
Let $\cR$ denote the relevant squared residual, either
\eqref{eq:neb_residual} or \eqref{eq:dimer_residual}.  After a possible first
entrance time and re-indexing, the iterates remain in a compact neighborhood
$\mathcal K$ of the target MEP discretization or saddle.  On $\mathcal K$,
$\barE\in C^3$, $\SigF\in C^1$, the covariance eigenvalues are bounded, and the
deterministic drift $h$ is locally Lipschitz.  For the finite-difference Dimer
bias statement below, assume in addition $\barE\in C^4$.

With $\mathcal S=\{X\in\mathcal K:\cR(X)=0\}$, there exists a $C^1$ Lyapunov
function $V$ and constants $c_1,c_2,c_3>0$ such that
\begin{equation}
  c_1\cR(X)\le V(X)\le c_2\cR(X),
  \qquad
  \nabla V(X)\cdot h(X)\le -c_3\cR(X)
  \label{eq:lyapunov_drift}
\end{equation}
on $\mathcal K$.

The noise in \eqref{eq:sa} satisfies
\[
  \E[M_{k+1}\mid\mathcal F_k]=0,
  \qquad
  \E[\norm{M_{k+1}}^2\mid\mathcal F_k]\le C_M(1+\norm{X_k}^2),
\]
and the bias is summable:
\begin{equation}
  \sum_{k=0}^\infty \alpha_k\norm{b_k}<\infty
  \qquad\hbox{almost surely.}
  \label{eq:bias_summable}
\end{equation}
The translational stepsizes satisfy
\[
  \alpha_k>0,\qquad
  \sum_{k=0}^\infty\alpha_k=\infty,\qquad
  \sum_{k=0}^\infty\alpha_k^2<\infty .
\]
For Dimer, $\beta_k=\rho_\beta\alpha_k$ with fixed $\rho_\beta>0$, up to a
summable deviation.  Its finite-difference bias is $O(h_k^2)$ when
$\barE\in C^4$, so \eqref{eq:bias_summable} follows from
$\sum_k\alpha_k h_k^2<\infty$.  A summable log-determinant penalty is included
in $b_k$ through the condition $\sum_k\alpha_k\gamma_k<\infty$.
\end{assumption}

For canonical UA-NEB, the local Lyapunov condition can be verified explicitly.

\paragraph{Canonical UA-NEB verification.}
Let $X_\star$ be a nondegenerate discretized MEP.  For each image, let
$\tau_{i,\star}$ be the limiting energy-weighted tangent and set
\[
  H_i:=\nabla^2\barE(x_{i,\star})|_{\tau_{i,\star}^\perp}\succ\mu_H\Id,
  \qquad
  G_{i,\star}:=(\SigF(x_{i,\star})+\lambda\Id)^{-1}.
\]
With
$y_i=Q_{\perp,G_{i,\star}}(x_i-x_{i,\star})$ and
$z_i=Q_{\parallel,G_{i,\star}}(x_i-x_{i,\star})$, take
\begin{equation}
  \begin{aligned}
  V(X)
  &=
  \sum_{i=1}^n
  \left[
    \tfrac12 y_i^\top H_i y_i+\tfrac{\theta}{2}\norm{z_i}^2
  \right]  \\
  &\quad
  +\tfrac{\rho_s}{2}
  \sum_{i=1}^n
  \left(
    \norm{x_{i+1}-x_i}-\norm{x_i-x_{i-1}}
  \right)^2 .
  \end{aligned}
  \label{eq:V_canonical}
\end{equation}

\begin{proposition}[Canonical UA-NEB stability]
\label{lem:lyapunov_neb_canonical}
For suitable $\theta,\rho_s>0$, in a sufficiently small neighborhood
$\mathcal K_\star$ of $X_\star$, the deterministic UA-NEB drift
$h^{\rm NEB}$ in \eqref{eq:neb_drift} satisfies
\[
  c_1\cR_{\rm NEB}(X)\le V(X)\le c_2\cR_{\rm NEB}(X),
  \qquad
  \nabla V(X)\cdot h^{\rm NEB}(X)\le -c_3\,\cR_{\rm NEB}(X),
\]
with constants depending on the local smoothness and spectral bounds, the
constrained Hessian gap, tangent-branch separation, image-spacing lower bounds,
spring stiffness, and the size of $\mathcal K_\star$.  Hence the deterministic
part of Assumption~\ref{ass:smooth} holds for this canonical UA-NEB setting.
\end{proposition}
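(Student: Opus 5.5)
Linearize the drift at $X_\star$ and compare the quadratic forms in $X-X_\star$. Write $e_i=x_i-x_{i,\star}$, so that $e_i=y_i+z_i$ by Lemma~\ref{lem:oblique_projection}. Since $\cR_{\rm NEB}(X_\star)=0$, $V(X_\star)=0$, and both are smooth, it suffices to prove three things near $X_\star$: \emph{(i)} $\cR_{\rm NEB}$ is a nondegenerate quadratic in $e=(e_1,\dots,e_n)$ up to $O(\norm{e}^3)$; \emph{(ii)} $V$ is likewise a nondegenerate quadratic; \emph{(iii)} $\nabla V\cdot h^{\rm NEB}$ is negative definite up to cubic remainders. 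Comparability of positive definite quadratics then gives the constants $c_1,c_2,c_3$, after shrinking $\mathcal K_\star$ to absorb the cubic terms.

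For (i), I would use Lemma~\ref{lem:tangent_slow}: $\tau_i(X)$ is $C^1$ on $\mathcal K_\star$, with derivative $D\tau_i$ and $\tau_i=\tau_{i,\star}+D\tau_i\,e+O(\norm{e}^2)$. The MEP condition gives $\nabla\barE(x_{i,\star})=g_i\tau_{i,\star}$. Expanding $Q_{\perp,G_i}G_i\nabla\barE(x_i)$ to first order and using Lemma~\ref{lem:oblique_projection}, the zeroth-order term vanishes. The first-order term is $Q_{\perp,G_{i,\star}}G_{i,\star}\big(\nabla^2\barE\,e_i+g_i D\tau_i e\big)$ plus metric-variation terms. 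The metric-variation terms are killed by $Q_{\perp}$ applied to vectors parallel to $G\tau$, since $\partial G\,\tau$ terms appear only multiplied by $g_i$; I will check this carefully.

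The normal part of this linear map is controlled from below on $y$ by $H_i\succ\mu_H$. The tangential components $z$ do not appear in the normal residual to leading order, but they do appear in the spring residual. The spacing differences linearize as $\tau_{i,\star}^\top$-weighted second differences $(s_{i+1}-s_i)-(s_i-s_{i-1})$ of the along-path displacements. With fixed endpoints this discrete Dirichlet Laplacian is invertible, so the spring residual bounds the tangential coordinates. This is where the image-spacing lower bounds and fixed endpoints enter. Together the two pieces give $\cR_{\rm NEB}\asymp\norm{e}^2$. The same decomposition shows $V\asymp\norm{e}^2$, because the first bracket in \eqref{eq:V_canonical} controls $y$ and $z$ separately and the spring term is comparable to the spring residual. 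The Euclidean versus $G_i$-norm spacing discrepancy is of lower order once $\theta>0$ controls $z$.

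For (iii), split $\nabla V\cdot h$ into normal and tangential blocks. In the normal block, $y_i^\top H_i$ paired with $-Q_\perp G\,H_i y_i$ should give $-y_i^\top H_i G_{i,\star} H_i y_i$-type dissipation, by the same KKT computation as in Lemma~\ref{lem:fixed_tangent_stability}. In the tangential block, the spring force acting on $z$ and on the spacing term yields $-\rho_s k_s\norm{Le}^2$-type dissipation, with $L$ the discrete Laplacian. The cross terms come from tangent variation $D\tau_i$ feeding normal error into tangential motion, and from springs feeding tangential error into $\tau$ and hence into normal forces.

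\textbf{Main obstacle.} The hardest step is controlling these cross terms. Their coefficients are only $O(1)$, so I would choose the weights hierarchically. First fix $k_s$. Then take $\theta$ and $\rho_s$ small, or large as appropriate, so that a Young inequality absorbs the off-diagonal blocks into the diagonal dissipations, using the constrained Hessian gap $\mu_H$ and the smallest eigenvalue of the Dirichlet Laplacian. If a uniform choice fails, I would fall back on the block-triangular structure. Normal relaxation depends on tangential error only through $D\tau_i$, and that dependence is small when spacings are near-uniform. A weighted sum with a small parameter on the coupling then gives a strict Lyapunov inequality. Finally, the dependence of the constants on tangent-branch separation comes from Lemma~\ref{lem:tangent_slow}: it guarantees that the same Henkelman--J\'onsson branch is used throughout $\mathcal K_\star$, so the linearization above is valid on the whole neighborhood.
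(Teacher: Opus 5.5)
Your plan follows the same route as the paper's sketch: linearize at $X_\star$, split $e_i=y_i+z_i$ with the oblique projections, and get normal dissipation from the variational (Schur-complement) identity. Like the paper, you control the longitudinal variables by the linearized spring-spacing operator, balance cross terms with $\theta,\rho_s$, and absorb the $X$-dependence of $\tau_i,G_i$ by shrinking $\mathcal K_\star$.

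The step that fails is your claim that the Euclidean-versus-$G_i$ spacing discrepancy is lower order, and with it the bound $V\le c_2\cR_{\rm NEB}$. Since $\cR_{\rm NEB}(X_\star)=0$, the band $X_\star$ has equal $G_i$-spacings. But equal $G_i$-lengths of two differently oriented secants do not give equal Euclidean lengths. Bounded spectra of $G_i$, which is also the paper's justification for the same equivalence, only give $\norm{u}\asymp\norm{u}_{G_i}$ vector by vector.

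Concretely, take one interior image $x=(t,s)$ between the minima $(\pm1,0)$ of \eqref{eq:toy_energy}, with constant $G$. Equal endpoint energies make the energy-weighted tangent $e_1$, and the normal condition gives $s=0.38(1-t^2)\neq0$. The metric spacing condition then reads $G_{11}t+G_{12}s=0$, whereas equal Euclidean spacing needs $t=0$. So for $G_{12}\neq0$ the spring block of \eqref{eq:V_canonical} is strictly positive at $X_\star$ while $\cR_{\rm NEB}(X_\star)=0$, and no $c_2$ works on any neighborhood. At minimum, the spacings in $V$ must be measured in the image metrics $G_i$ as in \eqref{eq:neb_residual}, so that $V$ vanishes where $\cR_{\rm NEB}$ does.

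The cross-term step is also weaker than you assume. Tangential error does enter the normal residual at first order:
$z_i$ lies along $G_{i,\star}\tau_{i,\star}$, not along $\tau_{i,\star}$, and on a smooth MEP $(\Id-\tau\tau^\top)\nabla^2\barE\,\tau$ has size $\norm{\nabla\barE}$ times the path curvature. The spring force moves $y$, because $Q_{\perp,G}\tau\neq0$ unless $\tau$ is a $G$-eigenvector. Conversely, $y$ moves the metric spacings, because $\tau^\top Gy\neq0$ for $y\perp\tau$ in general. So the coupling is $O(1)$ in both directions and is governed by the anisotropy of $G_i$ relative to $\tau_i$ and by force times curvature, not by spacing uniformity. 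Your triangular fallback is therefore unavailable.

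Reweighting a block-diagonal $V$ then closes a Young estimate only under a quantitative smallness condition, roughly that the product of the two couplings be below the product of the two dissipation rates. A clean repair of both issues is to take as the nondegeneracy hypothesis that $J=Dh^{\rm NEB}(X_\star)$ is Hurwitz, and set $V(X)=(X-X_\star)^\top P(X-X_\star)$ with $J^\top P+PJ=-\Id$. Your step (i) then needs no block argument, which is just as well, since your version ignores the same off-diagonal blocks. The normal and spring parts of $h_i^{\rm NEB}$ are Euclidean-orthogonal, so $\norm{h^{\rm NEB}}^2\asymp\cR_{\rm NEB}$. Invertibility of $J$ gives $\norm{h^{\rm NEB}(X)}\gtrsim\norm{X-X_\star}$ near $X_\star$, and hence $V\asymp\cR_{\rm NEB}$ and $\nabla V\cdot h^{\rm NEB}\le -c_3\cR_{\rm NEB}$.
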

Here is the drift mechanism.  Freeze $\tau_i$ and $G_i$ at $X_\star$ and
decompose $x_i-x_{i,\star}=y_i+z_i$ into the Euclidean normal and metric
tangent blocks used in \eqref{eq:V_canonical}.  On the normal subspace,
Taylor expansion gives
$\nabla\barE(x_i)=H_i y_i+O(\norm{X-X_\star}^2)$ after removing the tangential
MEP component.  The variational identity \eqref{eq:neb_variational_step}
implies, for any normal vector $w$,
\[
  w^\top Q_{\perp,G_{i,\star}}G_{i,\star}w
  \ge c_G\norm{w}^2 ,
\]
where $c_G>0$ is the Schur-complement lower bound of $G_{i,\star}$ on
$\tau_{i,\star}^{\perp}$.  Hence the normal contribution satisfies
\[
  \nabla_{y_i}V\cdot
  \big(-Q_{\perp,G_{i,\star}}G_{i,\star}\nabla\barE(x_i)\big)
  \le
  -c\,\norm{H_i y_i}^2+O(\norm{X-X_\star}^3).
\]
The longitudinal variables are controlled by the linearized spring-spacing
operator; choosing $\theta$ and $\rho_s$ balances the normal--tangential cross
terms, giving a negative definite frozen linearization.  The Lyapunov spring
block uses Euclidean spacings, but it is equivalent to the metric-spring
residual in \eqref{eq:neb_residual} because $G_i$ and $G_i^{-1}$ have uniformly
bounded spectra.  Smooth variation of $\tau_i(X)$, $G_i(X)$, and the metric
spring is then absorbed by shrinking $\mathcal K_\star$.  The full perturbation
estimates are recorded in Supplementary Section~SM2.5.

The nondegeneracy hypothesis on $X_\star$ is generic when $\barE$ admits an
isolated smooth continuous MEP with strictly positive constrained Hessian gap;
see \cite{ren2013climbing,weinan2002string} and
Supplementary Section~SM2.5.  We work in this canonical setting for
the explicit NEB stability verification.  For UA-Dimer,
\eqref{eq:dimer_residual_equiv} identifies the local residual; the same
stochastic conclusion applies when the Lyapunov drift in
Assumption~\ref{ass:smooth} holds on that branch.

Assumption~\ref{ass:smooth} gives the single estimate on which the stochastic
proof rests.  A Taylor expansion of $V$ at $X_k$, the
martingale property of $M_{k+1}$, the Lyapunov drift
\eqref{eq:lyapunov_drift}, and the second-moment bound on $\mathcal K$ imply
that, for sufficiently small $\sup_k\alpha_k$, there exist constants $c,C>0$
and a summable nonnegative sequence $\epsilon_k$ such that
\begin{equation}
  \E[V(X_{k+1})\mid\mathcal F_k]
  \le
  V(X_k)-c\alpha_k\cR(X_k)+C\alpha_k^2+\alpha_k\epsilon_k .
  \label{eq:rs_estimate}
\end{equation}
The Taylor-remainder estimates behind \eqref{eq:rs_estimate} are collected in
Supplementary Section~SM2.

\begin{theorem}[Local convergence]
\label{thm:convergence}
Suppose Assumption~\ref{ass:smooth} holds for the recursion in $\mathcal K$.
Then
\[
  \sum_{k=0}^\infty \alpha_k\cR(X_k)<\infty
  \qquad\hbox{and}\qquad
  \cR(X_k)\to0
  \quad\hbox{almost surely.}
\]
Moreover, if $\mathcal S\cap\mathcal K$ consists of isolated equilibria, then
$X_k$ converges almost surely to one of them.
\end{theorem}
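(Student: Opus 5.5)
The plan is to apply the Robbins--Siegmund almost-supermartingale theorem to the one-step estimate \eqref{eq:rs_estimate}. First I will make \eqref{eq:rs_estimate} precise on $\mathcal K$. Expand $V(X_{k+1})=V(X_k+\alpha_k[h+M_{k+1}+b_k])$ to second order with a Lipschitz bound on $\nabla V$ over $\mathcal K$. Take conditional expectations and kill the linear martingale term using $\E[M_{k+1}\mid\mathcal F_k]=0$. Apply the drift inequality $\nabla V\cdot h\le -c_3\cR$ from \eqref{eq:lyapunov_drift}. Bound the bias cross term by $\alpha_k\norm{\nabla V(X_k)}\norm{b_k}\le C\alpha_k\norm{b_k}$, using compactness of $\mathcal K$. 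Bound the quadratic term by $C\alpha_k^2(\norm{h}^2+C_M(1+\norm{X_k}^2)+\norm{b_k}^2)\le C\alpha_k^2$, again using compactness and local boundedness of $h$; since $\alpha_k\norm{b_k}$ is summable, $\alpha_k^2\norm{b_k}^2$ is eventually dominated by it. This yields \eqref{eq:rs_estimate} with $\epsilon_k=C\norm{b_k}$, so that $\sum_k\alpha_k\epsilon_k<\infty$ almost surely by \eqref{eq:bias_summable}.

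Next I invoke Robbins--Siegmund with $V_k=V(X_k)\ge0$, decrease term $c\alpha_k\cR(X_k)$, and perturbation $C\alpha_k^2+\alpha_k\epsilon_k$. The perturbation is almost surely summable because $\sum\alpha_k^2<\infty$ together with \eqref{eq:bias_summable}. The theorem then gives that $V(X_k)$ converges almost surely and that $\sum_k\alpha_k\cR(X_k)<\infty$, which is the first claim. Since $V\ge c_1\cR$, the quantity $\cR(X_k)$ stays bounded; but summability against a non-summable $\alpha_k$ only gives $\liminf_k\cR(X_k)=0$.

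To upgrade this to $\cR(X_k)\to0$, I use the sandwich $c_1\cR\le V\le c_2\cR$. Because $V(X_k)\to V_\infty$ almost surely and $\liminf\cR(X_k)=0$, we get $\liminf V(X_k)\le c_2\liminf\cR(X_k)=0$. Hence $V_\infty=0$, and therefore $\cR(X_k)\le V(X_k)/c_1\to0$. This sandwich step is what lets me avoid the usual oscillation argument that would otherwise require Lipschitz control of $\cR$ along iterates and the martingale convergence of $\sum\alpha_kM_{k+1}$. I will still note as a fallback that $\sum\alpha_k M_{k+1}$ converges almost surely, by the $L^2$ martingale convergence theorem with bounded conditional variances on $\mathcal K$, in case one works with a weaker $V$.

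For the last claim, continuity of $\cR$ and compactness of $\mathcal K$ imply that $\dist(X_k,\mathcal S)\to0$. The increments $X_{k+1}-X_k=\alpha_k(h+M_{k+1}+b_k)$ tend to zero almost surely, because $\alpha_k\to0$, $\alpha_kM_{k+1}\to0$ from $\sum\alpha_k^2\E\norm{M_{k+1}}^2<\infty$, and $\alpha_k\norm{b_k}\to0$. The limit set of $X_k$ is therefore connected. A connected subset of a set of isolated points in the compact $\mathcal K$ is a single point, so $X_k$ converges to one equilibrium.

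The main obstacle I expect is the standing assumption that iterates remain in $\mathcal K$. Strictly, the conclusions hold on the event that this happens, and I will state them that way, since Assumption~\ref{ass:smooth} postulates it. A secondary technical point is that the second-moment bound must hold uniformly, which compactness of $\mathcal K$ supplies.
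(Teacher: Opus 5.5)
Your proof is correct, and its first half matches the paper's. Both obtain \eqref{eq:rs_estimate} by Taylor-expanding $V$ and then apply Robbins--Siegmund to get a.s.\ convergence of $V(X_k)$ and $\sum_k\alpha_k\cR(X_k)<\infty$.

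The routes split at the upgrade to $\cR(X_k)\to0$. The paper invokes the ODE method of stochastic approximation (Kushner--Yin, Borkar) to identify the limit set with an internally chain-transitive set of $\dot X=h(X)$ in $\mathcal K$. It then uses the strict drift to exclude such sets outside $\mathcal S$, and gets the single-point conclusion from connectedness of that limit component. You instead use the upper comparison $V\le c_2\cR$ already contained in \eqref{eq:lyapunov_drift}. Since $\liminf_k\cR(X_k)=0$, also $\liminf_k V(X_k)=0$, so the a.s.\ limit of $V(X_k)$ is $0$ and $\cR(X_k)\le V(X_k)/c_1\to0$. For the last claim you get connectedness of the limit set directly from vanishing increments, using $\sum_k\alpha_k^2\E\norm{M_{k+1}}^2<\infty$.

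Your version is elementary and self-contained. For the upgrade it needs only boundedness of $h$ and of the conditional noise variance on $\mathcal K$, not the full hypotheses of the ODE theorem, and it yields the slightly stronger $V(X_k)\to0$. The paper's version is the standard framework that generalizes to a $V$ that is merely a strict Lyapunov function rather than two-sided comparable to $\cR$, and it describes the limit set dynamically.

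One shared technicality: your second-order expansion uses a Lipschitz bound on $\nabla V$, while Assumption~\ref{ass:smooth} only asserts $V\in C^1$. The paper defers the same remainder estimate to its supplement, so this is not a defect relative to its proof, but you should list it as an explicit hypothesis.
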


\begin{proof}
With
$Y_k=V(X_k)+\sum_{j\ge k}C\alpha_j^2+\sum_{j\ge k}\alpha_j\epsilon_j$,
\eqref{eq:rs_estimate} gives
$\E[Y_{k+1}\mid\mathcal F_k]\le Y_k-c\alpha_k\cR(X_k)$.  The
Robbins--Siegmund theorem \cite{robbins1971convergence} therefore gives
convergence of $V(X_k)$ and summability of $\sum_k\alpha_k\cR(X_k)$.  The ODE
method for stochastic approximation with square-summable martingale noise and
summable bias \cite[Thm.~5.2.1]{kushner2003stochastic}
(see also \cite[Thm.~2.1]{borkar2008stochastic}) identifies the almost-sure
limit set with an internally chain-transitive set of $\dot X=h(X)$ inside
$\mathcal K$.  The strict Lyapunov drift \eqref{eq:lyapunov_drift} excludes
such sets outside $\mathcal S$, hence $\cR(X_k)\to0$ almost surely.  If
$\mathcal S\cap\mathcal K$ consists of isolated equilibria, the limit component
is a single point.
\end{proof}

If $\gamma_k\equiv\gamma>0$, the same proof applies after absorbing the
log-determinant term into the deterministic drift.  The limiting equations are
then those of the regularized landscape $\barE+\gamma\Psi_\lambda$ for NEB, or
the corresponding regularized reflected-gradient dynamics for Dimer.

\subsection{Local mean-square rate}
Theorem~\ref{thm:convergence} is qualitative.  If the deterministic drift is
locally linearly contractive in the Lyapunov function, the same estimate yields
a non-asymptotic $L^2$ rate.  In the canonical UA-NEB setting,
Proposition~\ref{lem:lyapunov_neb_canonical} gives this contraction directly:
since
$V\le c_2\cR_{\rm NEB}$ and
$\nabla V\cdot h^{\rm NEB}\le -c_3\cR_{\rm NEB}$, one may take
$\mu=c_3/c_2$.

\begin{proposition}[Local $L^2$ convergence rate]
\label{prop:rate}
Assume the setting of Theorem~\ref{thm:convergence}.  Let
$\mathcal K_\star\subset\mathcal K$ be a compact neighborhood of the target set
on which the stronger drift inequality
$\nabla V(X)\cdot h(X)\le -\mu V(X)$ holds for some $\mu>0$.  Let
$\tau_\star=\inf\{j\ge0:X_j\notin\mathcal K_\star\}$.  Choose
$\alpha_k=\alpha_0/(k+k_0)$ with $\alpha_0\mu>1$ and $k_0$ large enough that
$\alpha_k\mu\le 1/2$.  If the remainder sequence in \eqref{eq:rs_estimate}
satisfies $\epsilon_k=O((k+k_0)^{-q})$ for some $q>1$, then
\[
  \E\!\left[V(X_k)\mathbf 1_{\{\tau_\star>k\}}\right]=O\!\left(\frac{1}{k+k_0}\right)
\]
and, by the equivalence of $V$ and $\cR$,
\[
  \E[\cR(X_k)\mathbf 1_{\{\tau_\star>k\}}]=O(1/k).
\]
If the iteration is localized so that $\tau_\star=\infty$ almost surely, this is
the unconditional $L^2$ rate.
\end{proposition}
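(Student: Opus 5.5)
We derive a one-step recursion for the stopped quantity $u_k:=\E[V(X_k)\mathbf 1_{\{\tau_\star>k\}}]$ and then close it with a standard Chung-type lemma for sequences with harmonic stepsizes. We start from the pathwise version of \eqref{eq:rs_estimate}, but replace the drift term $-c\alpha_k\cR(X_k)$ by the stronger one from $\nabla V\cdot h\le-\mu V$ on $\mathcal K_\star$. The Taylor expansion of $V$ at $X_k$ behind \eqref{eq:rs_estimate} gives $V(X_{k+1})=V(X_k)+\alpha_k\nabla V(X_k)\cdot[h(X_k)+M_{k+1}+b_k]+O(\alpha_k^2\norm{h+M_{k+1}+b_k}^2)$. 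Conditioning on $\mathcal F_k$ on the event $\{\tau_\star>k\}$ (which is $\mathcal F_k$-measurable, so $X_k\in\mathcal K_\star$) removes the martingale term. The second moment of $M_{k+1}$ is bounded on the compact set $\mathcal K_\star$, and the bias contributes at most $\alpha_k\epsilon_k$. Together these give
\[
  \E[V(X_{k+1})\mid\mathcal F_k]\mathbf 1_{\{\tau_\star>k\}}
  \le\big[(1-\mu\alpha_k)V(X_k)+C\alpha_k^2+\alpha_k\epsilon_k\big]\mathbf 1_{\{\tau_\star>k\}} .
\]

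Next we handle the stopping. Since $\{\tau_\star>k+1\}\subset\{\tau_\star>k\}$ and $V\ge0$, we have $V(X_{k+1})\mathbf 1_{\{\tau_\star>k+1\}}\le V(X_{k+1})\mathbf 1_{\{\tau_\star>k\}}$. Taking full expectations yields the scalar recursion
\[
  u_{k+1}\le(1-\mu\alpha_k)u_k+C\alpha_k^2+\alpha_k\epsilon_k .
\]
With $\alpha_k=\alpha_0/(k+k_0)$, the forcing terms are $C\alpha_0^2(k+k_0)^{-2}$ and $O((k+k_0)^{-1-q})$. Because $q>1$, both are $O((k+k_0)^{-2})$, so the recursion reads $u_{k+1}\le(1-a/(k+k_0))u_k+B(k+k_0)^{-2}$ with $a=\alpha_0\mu>1$.

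We then prove $u_k\le D/(k+k_0)$ by induction, with $D\ge\max\{k_0u_0,\,B/(a-1)\}$. The inductive step amounts to checking
\[
  \frac{D}{k+k_0}\Big(1-\frac{a}{k+k_0}\Big)+\frac{B}{(k+k_0)^2}\le\frac{D}{k+k_0+1}.
\]
This reduces to $(D(a-1)-B)(k+k_0)\ge D-B$, up to lower-order terms, and holds once $D(a-1)\ge B$ and $k_0$ is large. The condition $\alpha_k\mu\le1/2$ keeps each factor $1-\mu\alpha_k$ in $[1/2,1)$, so the contraction is legitimate for every $k$. This step is where $\alpha_0\mu>1$ is essential: with $a\le1$ one only gets $O(k^{-a})$ or $O(\log k/k)$.

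Finally, the equivalence $c_1\cR\le V$ from \eqref{eq:lyapunov_drift} transfers the bound to $\E[\cR(X_k)\mathbf 1_{\{\tau_\star>k\}}]\le D/(c_1(k+k_0))$. If $\tau_\star=\infty$ almost surely, the indicator is identically one and the bound is unconditional. The main obstacle is the first step: making sure the second-order Taylor constant and the noise constant $C$ are uniform. This needs $V\in C^1$ with a Lipschitz gradient on $\mathcal K_\star$. We would either assume this or obtain it from the explicit quadratic form \eqref{eq:V_canonical}, which is smooth. For the general Lyapunov case we would note that the Supplementary Taylor estimates already underlying \eqref{eq:rs_estimate} supply exactly this uniform constant on the compact set.
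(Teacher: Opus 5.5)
Your proposal is correct and follows essentially the paper's route: you re-derive the one-step bound $\E[V(X_{k+1})\mid\mathcal F_k]\le(1-\mu\alpha_k)V(X_k)+C\alpha_k^2+\alpha_k\epsilon_k$ on $\{\tau_\star>k\}$, pass to the stopped expectation using $\{\tau_\star>k+1\}\subset\{\tau_\star>k\}$ and $V\ge0$, and close the scalar recursion by a Chung-type induction, which does the same job as the paper's discrete Gronwall product bound. One small algebra correction: the inductive step is exactly equivalent to $\big((a-1)D-B\big)(k+k_0)+(aD-B)\ge0$, so $D(a-1)\ge B$ together with $D\ge k_0u_0$ already suffices for every $k$, and you do not need to enlarge $k_0$ further.
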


The proof combines \eqref{eq:rs_estimate} with the contraction
$\nabla V\cdot h\le -\mu V$ and a discrete Gronwall iteration
\cite[Ch.~2]{kushner2003stochastic}; the full product bound and bias estimates
are recorded in Supplementary Section~SM2.7.

Proposition~\ref{prop:rate} is the rate statement checked in the analytic
experiment: under the canonical setting verified in
Proposition~\ref{lem:lyapunov_neb_canonical}, UA-NEB has a localized $O(1/k)$
mean-square residual bound.  The barrier-error experiments in
Section~\ref{sec:numerics} then test whether this residual-level mechanism
improves the final reporting functional.
For UA-Dimer, the analogous rate statement requires the local Dimer Lyapunov
condition in Assumption~\ref{ass:smooth} and the strengthened contraction
hypothesis above.

Theorem~\ref{thm:convergence} is stated for a fixed mean force and covariance
model; finitely many retraining updates are handled by applying it after the
last update, and continuing retraining requires summable drift perturbations
(Supplementary Section~SM2.8).

\subsection{Scalable covariance realizations}
\label{sec:complexity}

The convergence theorem uses $\SigF$ only through spectral bounds and products
with $G=(\SigF+\lambda\Id)^{-1}$.  It therefore does not require dense
covariance matrices.  The algorithms use covariance through the operator
interface $z\mapsto Gz$ and scalar products involving $Gz$.

This interface is compatible with the standard scalable realizations used for
stochastic force models.  Dense covariance is useful for small validation
problems but costs $O(d^2)$ storage and $O(d^3)$ factorization.  Diagonal,
atomwise, and local-block covariances reduce storage to $O(d)$ or $O(Nb^2)$ and
make the covariance-weighted step linear in the number of local environments up
to the block or Krylov cost.

Low-rank forms $\SigF\approx UCU^\top$ can be applied by Woodbury with cost
$O(dr+r^3)$ when the small $r\times r$ solve is factored on demand, or
$O(dr+r^2)$ per apply when that factorization is cached.  Crystalline-defect
settings can additionally eliminate elastic far-field variables when uncertainty
is localized near the defect core.  Detailed per-iteration cost models and the
low-rank and far-field formulas are collected in
Supplementary Section~SM4.

\section{Numerical experiments}
\label{sec:numerics}

The experiments follow one question: does covariance help when it is placed in
the constrained update geometry?  On a controlled analytic MEP, we separate
metric weighting from scalar penalties and label refresh, and then check that
the observed residual decay is consistent with Proposition~\ref{prop:rate}.  A
covariance-rotation sweep asks when full tensor information matters.  A Dimer
test isolates local saddle refinement, and a W-vacancy benchmark tests path
search in an atomistic defect geometry.  The active-learning trigger
\eqref{eq:al_trigger} and the NEB--Dimer handoff condition
\eqref{eq:handoff_condition} are not varied here.

Unless stated otherwise, $\pm$ values are standard errors of the mean over
paired stochastic seeds.  Paired NEB variants share image count, iteration
count, trust radii, force-evaluation counts, initial band, and force-noise
sequence; the Dimer test shares initial centers, orientations, and force-query
counts.  Common NEB settings are identical across methods, and concrete
parameter values are collected in Supplementary Section~SM6 and Supplementary
Table~S4.

The analytic benchmark potential is
\begin{equation}
  E(x_1,x_2)
  =
  (x_1^2-1)^2
  +
  k\big(x_2-a(1-x_1^2)\big)^2,
  \qquad
  a=0.38,\quad k=7.5,
  \label{eq:toy_energy}
\end{equation}
whose exact MEP is $x_2=a(1-x_1^2)$ and whose saddle barrier is one.  We perturb the exact gradient by mean-zero Gaussian force noise whose covariance tube is placed near the transition region and whose largest eigenvector is nearly transverse to the path.  This setting is simple enough to interpret geometrically and anisotropic enough to expose path wandering in Euclidean stochastic NEB.

\subsection{Uncertainty-aware NEB on an analytic MEP}
\label{sec:numerics_neb}

The finite-step NEB experiment asks where uncertainty must enter the algorithm
to reduce barrier error while monitoring path deviation.  Figure~\ref{fig:ua_neb_benchmark}
compares six variants:
\begin{itemize}
  \item \textbf{std}: standard stochastic NEB (Euclidean projection, no covariance);
  \item \textbf{pen.}: log-determinant penalty only \eqref{eq:logdet_penalty} without metric weighting;
  \item \textbf{AL}: a periodic label-refresh baseline that replaces noisy forces by exact ones at the highest-uncertainty images with matched image updates; this tests uncertainty-guided labeling as a comparator, not the three-condition trigger \eqref{eq:al_trigger};
  \item \textbf{metric}: oblique metric force \eqref{eq:oblique_projection} only, Euclidean spring;
  \item \textbf{diag}: UA-NEB with $\widehat\SigF$ replaced by $\diag(\widehat\SigF)$, a per-component-variance ablation that isolates diagonal information from off-diagonal covariance;
  \item \textbf{UA}: full UA-NEB update \eqref{eq:ua_neb_force} including the metric spring \eqref{eq:g_spring}.
\end{itemize}
In the finite-step experiments we rescale each inverse-covariance metric to have trace two.  This fixes the step scale; at a fixed image it leaves the oblique normal direction and its zero set unchanged.

These ablations separate uncertainty used outside the constrained update geometry
(\textbf{pen.}, \textbf{AL}) from reliability placed directly in the NEB step
(\textbf{metric}, \textbf{diag}, \textbf{UA}).  In this base cell the
high-uncertainty transition region is close enough to Cartesian alignment that
diagonal weighting is expected to be a strong comparator.  The rotated sweep
(Figure~\ref{fig:ua_neb_sensitivity}) and the W-vacancy benchmark
(\S\ref{sec:atomistic_validation}) are designed to break this alignment.

\begin{figure}[t]
\centering
\includegraphics[width=.96\textwidth]{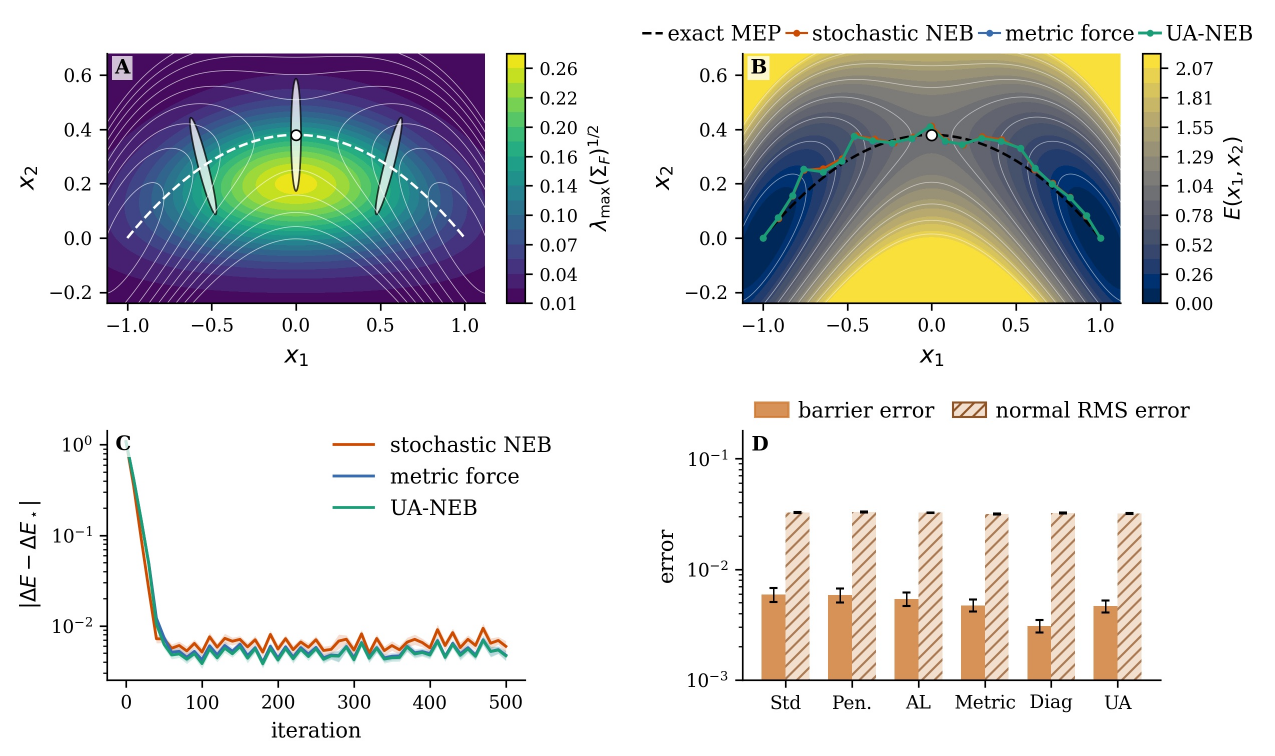}
\caption{Synthetic anisotropic-noise NEB benchmark.  (A) Largest force-covariance standard deviation; white ellipses show covariance eigenframes and the dashed curve is the exact MEP.  (B) Representative final paths for selected methods.  (C) Mean barrier-error trajectories over $200$ paired seeds, with standard-error bands.  (D) Final barrier error and normal-path RMS for all six ablations.  Metric-based updates provide the main barrier-error gain; diagonal weighting is unusually strong in this aligned base cell, motivating the rotated-covariance sweep and W-vacancy test.}
\label{fig:ua_neb_benchmark}
\end{figure}

The first conclusion is mechanistic.  Only the variants that put covariance into
the projected NEB step give a clear reduction in barrier error.  Full UA-NEB and
the metric-only ablation give nearly the same improvement (about one fifth,
paired Wilcoxon $p<4\!\times\!10^{-5}$), whereas the scalar penalty and periodic
label refresh do not change the outcome appreciably.  Diagonal weighting is even
stronger in this particular cell, reducing the mean barrier error by $48\%$,
because the high-uncertainty covariance eigenframe is close to the coordinate
axes.  Thus the base experiment identifies the active mechanism and also
explains why the diagonal realization must be carried as a necessary comparator.

The same analytic setting gives a residual-level consistency check for the
theory; the supporting diagnostic is reported in
Supplementary Section~SM7 and Supplementary
Figure~S2.  Over
$k\in[5,40]$, the fitted slopes range from $-1.43$ to $-1.26$, followed by a
finite-step plateau after $k\approx50$.  Its role is to check that the transient
residual scale used by the theory is visible before finite-step bias dominates.

The covariance-structure sweep answers the alignment question.  It rotates the
principal axes relative to the path-tangent/normal frame while varying the
perpendicular noise amplitude $\sigma_n^{(\mathrm{amp})}$.  Full UA-NEB improves
on stochastic NEB in 22 of the 35 cells and on diagonal weighting in 20 cells.
The cellwise pattern matters more than the aggregate count: diagonal weighting
remains strongest near the aligned high-anisotropy corner represented by the
base experiment, whereas the full tensor can gain value when reliable and
unreliable directions rotate with the path.  The W-vacancy benchmark below
targets this non-Cartesian regime.

\begin{figure}[t]
\centering
\includegraphics[width=.95\textwidth]{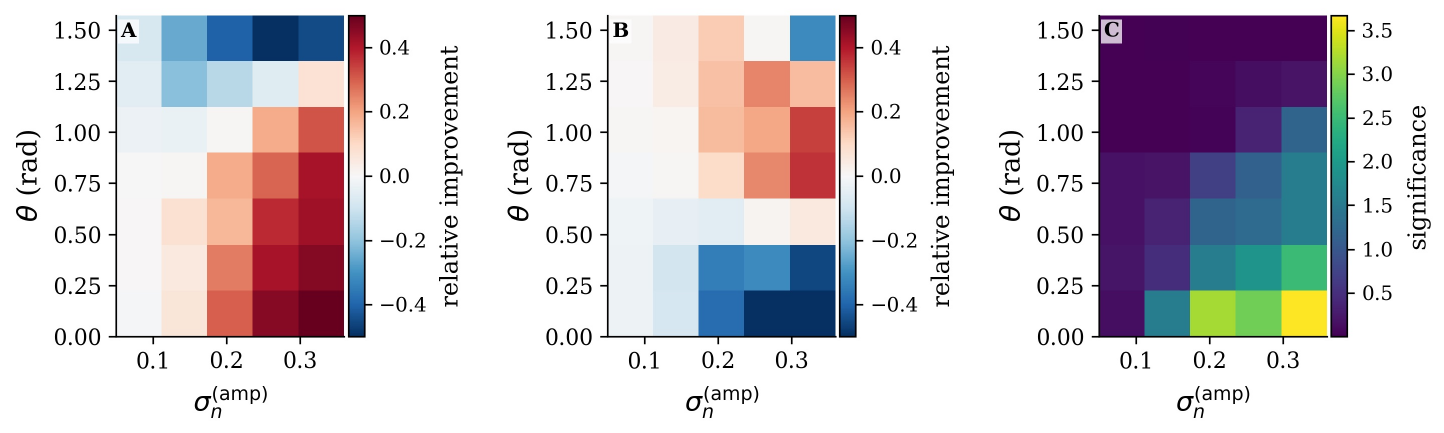}
\caption{Covariance-structure sensitivity sweep.  (A,B) Relative improvement of full UA-NEB over stochastic NEB and diagonal weighting; positive values mean smaller barrier error, and the color scale is clipped to the displayed range.  (C) Unadjusted one-sided Wilcoxon scores $-\log_{10}p$ for full UA $<$ std.}
\label{fig:ua_neb_sensitivity}
\end{figure}

\subsection{Dimer rotation and translation test}
\label{sec:numerics_dimer}

The Dimer experiment asks a narrower question: whether the same covariance
geometry lowers the local residual floor after a path method has supplied a
saddle candidate.  We initialize the center away from the saddle on the analytic
potential and corrupt the HVP by the same anisotropic force covariance.  UA-Dimer
uses the HVP covariance \eqref{eq:hvp_cov_force} in the rotation and the
normalized inverse force covariance in the reflected-gradient translation.
The supporting trajectories and residual distributions are shown in
Supplementary Section~SM8 and Supplementary
Figure~S3.

The result is a residual-floor improvement rather than a basin change.  The mean
final reflected-gradient residual drops from $0.224\pm0.012$ to
$0.174\pm0.009$ (paired Wilcoxon one-sided $p<10^{-12}$), while the
distance-to-saddle success rates are $96.0\%$ and $95.5\%$, respectively.  Covariance
weighting therefore improves local refinement before any active-learning
intervention is invoked.

This local test evaluates the covariance-weighted Dimer update as a refinement
mechanism for saddle candidates supplied by a path method; it does not tune the
handoff threshold $\eta_{\rm hand}$ in \eqref{eq:handoff_condition}.  We do not
report a separate atomistic Dimer experiment, since that would require a distinct
local saddle-refinement benchmark with controlled Hessian-vector noise.

\subsection{Atomistic W-vacancy benchmark}
\label{sec:atomistic_validation}

The atomistic test returns to path search under the non-Cartesian covariance
structure suggested by the sweep.  We test a nearest-neighbor monovacancy hop in
bcc tungsten using the Mason--Nguyen-Manh--Becquart EAM/FS potential
\cite{mason2017empirical} as both the conditional mean force and the
deterministic benchmark, with a prescribed mean-zero stochastic perturbation of
covariance form \eqref{eq:force_model}.  The covariance is localized near the
vacancy core, strongly transverse to the hop, and coupled to an elastic far
field.  The deterministic reference barrier is
$\Delta E_{\rm ref}\approx1.5379\,{\rm eV}$; the remaining numerical settings
are listed in Supplementary Section~SM6 and Supplementary
Table~S4.

\begin{figure}[t]
\centering
\includegraphics[width=.92\textwidth]{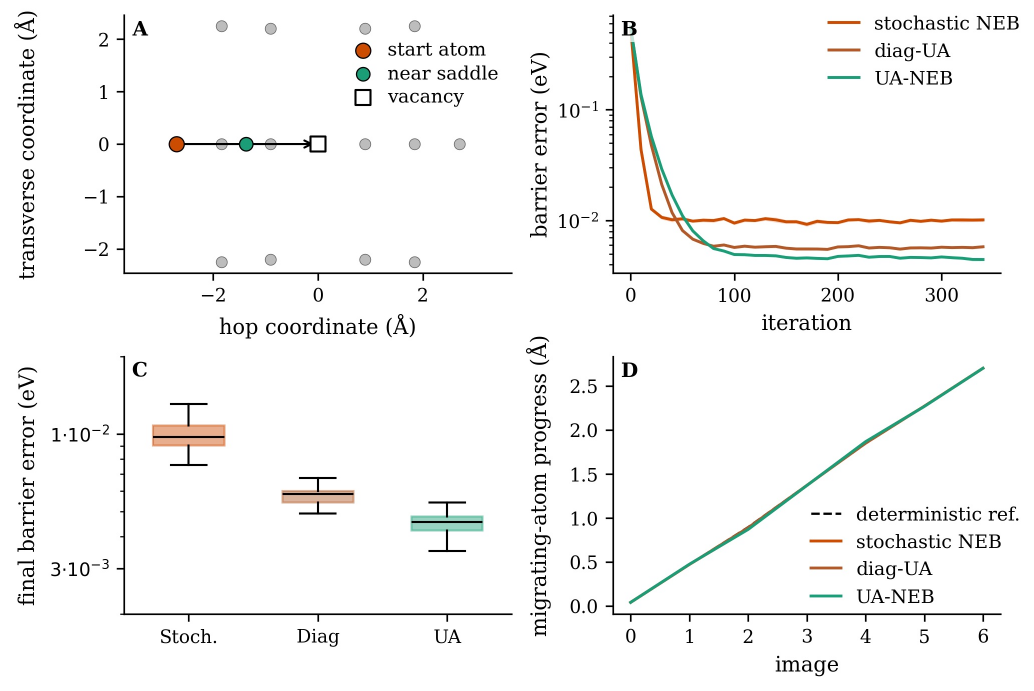}
\caption{Atomistic bcc tungsten vacancy-hop benchmark with EAM/FS mean force, prescribed anisotropic covariance, $N=127$, $n=7$, and $24$ seeds.  (A) Vacancy core and migrating atom.  (B) Mean barrier-error trajectory.  (C) Final barrier-error distribution.  (D) Migrating-atom progress.  Full UA-NEB reduces final barrier error by $\sim56\%$ relative to stochastic NEB and by $\sim23\%$ relative to diagonal weighting.}
\label{fig:w_vacancy_benchmark}
\end{figure}

Figure~\ref{fig:w_vacancy_benchmark} is the central atomistic test for full
covariance geometry.  Diagonal weighting already improves the barrier estimate, but the full
tensor adds the expected gain in a defect geometry whose reliable and unreliable
directions are not Cartesian.  The final mean absolute barrier errors are
$10.14\pm0.39$, $5.80\pm0.14$, and $4.45\pm0.14\,{\rm meV}$ for stochastic,
diagonal, and full UA-NEB.  Thus full UA-NEB reduces the mean error by $56\%$
relative to stochastic NEB and by $23\%$ relative to diagonal weighting.  All
$24$ paired differences have the same sign, so the one-sided Wilcoxon tests
reach their finite lower bound $p=6.0\!\times\!10^{-8}$; paired
Hodges--Lehmann improvements are $5.43$ and $1.34\,{\rm meV}$ for the two
full-UA comparisons.

The residual and rate implications are consistent with the barrier errors.  The
deterministic-EAM normal-force residual follows the same ordering, with full
UA-NEB lowest, while panel D shows the same vacancy-hop channel across methods.
Using \eqref{eq:rate_sensitivity}, the reduction in mean barrier error from
$10.14$ to $4.45\,{\rm meV}$ decreases the corresponding absolute rate-factor
error at $600\,{\rm K}$ from about $18\%$ to about $8\%$.

\section{Conclusion}
\label{sec:conclusion}

This paper treats anisotropic force uncertainty as part of numerical algorithm design for constrained saddle search.  Rather than using covariance only to decide where to refine a surrogate, UA-NEB and UA-Dimer use it as a local metric for stochastic steps while preserving the deterministic NEB and Dimer stationarity equations.  This is the sense in which the methods are geometry-preserving: uncertainty changes how an update is taken, not which mean-potential MEP or index-one saddle is targeted.

The analysis casts the iterations as Robbins--Monro recursions with controlled metric-solve bias, proves local almost-sure convergence, and gives an $O(1/k)$ $L^2$ residual rate under a strengthened local contraction condition.  The experiments support the same algorithmic message: metric weighting, not variance avoidance alone, drives the main gains in the analytic tests, and the W-vacancy benchmark shows that full covariance information can improve over both stochastic NEB and diagonal weighting in an atomistic defect calculation.  These tests isolate optimizer geometry using controlled covariance models; in deployed MLIP workflows, covariance calibration and reference-label policy remain part of the modeling pipeline.  Because the metric can be realized through covariance-vector products, structured local blocks, or low-rank reductions, the approach can be coupled to ensemble or surrogate force models without changing the underlying NEB/Dimer stationary sets.

\bibliographystyle{siamplain}
\bibliography{bib}

\end{document}